\def\@Rref#1{\hbox{\rm \ref{#1}}}
\def\Rref#1{\@Rref{#1}}
\theoremstyle{plain}
\newtheorem{theorem}{Theorem}[section]
\newtheorem{proposition}[theorem]{Proposition}
\newtheorem{lemma}[theorem]{Lemma}
\theoremstyle{definition}
\newtheorem{definition}{Definition}[section]
\newtheorem{example}[definition]{Example}
\newcommand{\im}{\mathop{\rm Im}\nolimits}
\newcommand{\re}{\mathop{\rm Re}\nolimits}
\begin{document}

\title[Decay Rate of $\exp(A^{-1}t)A^{-1}$ and Crank-Nicolson Scheme]{Decay Rate of $\bm{\exp(A^{-1}t)A^{-1}}$ on a Hilbert Space
	and the Crank-Nicolson Scheme with Smooth Initial Data}

\thispagestyle{plain}

\author{Masashi Wakaiki}
\address{Graduate School of System Informatics, Kobe University, Nada, Kobe, Hyogo 657-8501, Japan}
 \email{wakaiki@ruby.kobe-u.ac.jp}
 \thanks{This work was supported by JSPS KAKENHI Grant Number JP20K14362.}

\begin{abstract}
This paper is concerned with the decay rate of $e^{A^{-1}t}A^{-1}$ 
for the generator $A$ of an exponentially stable $C_0$-semigroup 
on a Hilbert space.
To estimate the decay rate of $e^{A^{-1}t}A^{-1}$,
we apply a bounded functional calculus.
Using this estimate and Lyapunov equations, we also study
the quantified asymptotic behavior of the Crank-Nicolson
scheme with smooth initial data.
A similar argument is applied to a
polynomially stable $C_0$-semigroup whose generator is normal.
\end{abstract}

\subjclass[2010]{Primary 47D06; Secondary 46N40 $\cdot$ 47A60}

\keywords{$C_0$-semigroup, Crank-Nicolson scheme, Functional calculus,
	Lyapunov equations.} 

\maketitle

\section{Introduction}
Let $A$ be the generator of a bounded $C_0$-semigroup $(e^{At})_{t\geq 0}$ 
on
a Hilbert space. Suppose that the inverse
$A^{-1}$ of $A$ exists and also generates a $C_0$-semigroup
$(e^{A^{-1}t})_{t\geq 0}$.
The problem we study first is to estimate the decay rate of 
$\|e^{A^{-1}t}A^{-1}\|$ under additional assumptions on
stability of  $(e^{At})_{t\geq 0}$.
This is a variant of the so-called inverse generator problem raised by
deLaubenfels \cite{deLaubenfels1988}: ``Let $A$ be the generator of
a bounded $C_0$-semigroup
on
a Banach space. Assume that there exists an inverse $A^{-1}$
as a closed, densely-defined operator. Does $A^{-1}$
also generate a bounded $C_0$-semigroup?''
Positive answers to the inverse generator problem have been given
for bounded holomorphic $C_0$-semigroups on Banach spaces  \cite{deLaubenfels1988}
and for contraction $C_0$-semigroups on Hilbert spaces \cite{Zwart2007}.
Negative examples can be found in  \cite{Komatsu1966}
with respect to the generation property of $A^{-1}$ and in
\cite{Zwart2007} with respect to
the boundedness property of $(e^{A^{-1}t})_{t\geq 0}$;
see also \cite{Gomilko2007MS,Fackler2016} for other counterexamples.
For bounded $C_0$-semigroups on Hilbert spaces, 
the answer to the inverse generator problem  is still unknown, but it has been shown in 
\cite{Zwart2007SF} that if $A$ generates an exponentially stable $C_0$-semigroup on Hilbert spaces, then
$\|e^{A^{-1}t}\| = O(\log t)$ as $t \to \infty$, that is,
there exist $M > 0$ and $t_0 > 1$ such that $\|e^{A^{-1}t}\| \leq  M \log t$
for all $t \geq  t_0$.
This result has been extended to 
bounded $C_0$-semigroups  with boundedly invertible generators in \cite{Batty2021}.
We refer the reader to the survey article \cite{Gomilko2017} for more detailed
discussion of the inverse generator problem.

It is known that if $A$ generates an exponentially stable $C_0$-semigroup
on a Banach space, then
\begin{equation}
\label{eq:Banach}
\big\|e^{A^{-1}t} A^{-k}\big\| = O \left(
\frac{1}{t^{k/2 - 1/4}}
\right)\qquad  (t \to \infty)
\end{equation}
for all $k \in \mathbb{N}$.
This norm-estimate has been obtained in \cite{Zwart2007} for $k=1$
and in \cite{deLaubenfels2009} for $k \geq 2$.
In this paper, we obtain an analogous estimate for the Hilbert case:
if $A$ generates an exponentially stable $C_0$-semigroup
on a Hilbert 
space, then
\begin{equation}
\label{eq:Hilbert}
\big\|e^{A^{-1}t} A^{-k}\big\| = O \left(
\frac{1}{t^{k/2}}
\right)\qquad  (t \to \infty)
\end{equation}
for all $k \in \mathbb{N}$.
To obtain the estimate \eqref{eq:Banach} of the Banach case,
an integral representation of $(e^{A^{-1}t})_{t \geq 0}$
(see 
\cite[Lemma~3.2]{Zwart2007}
and 
\cite[Theorem~1]{Gomilko2007MS}) has been applied.
In contrast, to obtain the estimate \eqref{eq:Hilbert} of the Hilbert case,
we employ a functional-calculus approach based on
the $\mathcal{B}$-calculus introduced in \cite{Batty2021}.
Another class of $C_0$-semigroups we study are polynomially
stable $C_0$-semigroups with parameter $\beta>0$ on Hilbert spaces, where 
$(e^{At})_{t\geq 0}$ is called a polynomially stable
$C_0$-semigroup with parameter $\beta >0$ if
$(e^{At})_{t\geq 0}$ is bounded and if
$\|e^{At}(I-A)^{-1}\| = O(t^{-1/\beta})$ as $t \to \infty$.
By a functional-calculus approach  as in
the case for exponentially stable $C_0$-semigroups,
we show that if, in addition, $A$ is a normal operator, then
\begin{equation}
\label{eq:Hilbert_normal}
\big\|e^{A^{-1}t} A^{-k}\big\| = O \left(
\frac{1}{t^{k/(2+\beta)}}
\right)\qquad  (t \to \infty)
\end{equation}
for all $k \in \mathbb{N}$.
We also present simple examples for which
the norm-estimates \eqref{eq:Hilbert} 
and \eqref{eq:Hilbert_normal} cannot be improved.
Moreover, we prove that  the (not necessarily normal) generator $A$
of a polynomially stable $C_0$-semigroup with any parameter 
on a Hilbert space
satisfies $\sup_{t \geq 0} \|e^{A^{-1}t}A^{-1}\| < \infty$,
inspired by the Lyapunov-equation technique developed in \cite{Zwart2007SF} .

Next, we study the
Crank-Nicolson discretization scheme with variable stepsizes.
Let $A$ be the generator of a bounded $C_0$-semigroup on a Hilbert space,
and let $\tau_n$, $n \in \mathbb{N}_0\coloneqq \{ 0,1,2,\cdots\}$, satisfy 
$\tau_{\min} \leq \tau_n
\leq \tau_{\max}$ for some $\tau_{\max} \geq \tau_{\min} >0$.
Consider the time-varying difference equation 
\begin{equation}
\label{eq:diff_eq_intro}
x_{n+1} = A_d(\tau_n)x_n,\quad n \in \mathbb{N}_0,
\end{equation}
where $A_d(\tau)$ is the 
Cayley transform of $A$ with parameter $\tau>0$, i.e.,
\[
A_d(\tau) \coloneqq \left(
I + \frac{\tau}{2}A
\right) \left(
I - \frac{\tau}{2}A
\right)^{-1}. 
\]
It is our aim to estimate the decay rate of the solution $(x_n)_{n \in \mathbb{N}}$ 
with smooth initial data $x_0 \in D(A)$.

For a  constant stepsize $\tau >0$, upper bounds on
the growth rate of $(A_d(\tau)^n)_{n \in \mathbb{N}_0}$
have been obtained
for bounded $C_0$-semigroups 
and exponentially stable $C_0$-semigroups 
on Banach spaces; see \cite{Brenner1979, Gomilko2011}.
The relation between 
$(e^{A^{-1}t})_{t \geq 0}$ and
$(A_d(\tau)^n)_{n \in \mathbb{N}_0}$
in terms of growth rates
has been investigated in \cite{Gomilko2011}.
As in the inverse generator problem,
$(A_d(\tau)^n)_{n \in \mathbb{N}_0}$ is bounded whenever
$C_0$-semigroups are bounded holomorphic on Banach spaces 
\cite{Crouzeix1993}. Furthermore, it is well known that for contraction 
$C_0$-semigroups on  
Hilbert spaces, $A_d(\tau)$ is
a contraction; see \cite{Phillips1959}.
It is still open whether $(A_d(\tau)^n)_{n \in \mathbb{N}_0}$
is bounded for every bounded $C_0$-semigroup on a Hilbert space.
However, it has been shown independently in \cite{Azizov2004,Gomilko2004,
	Guo2006} that
if $A$ and $A^{-1}$ both generate bounded $C_0$-semigroups on 
a Hilbert space, then $(A_d(\tau)^n)_{n \in \mathbb{N}_0}$ is bounded.
If in addition $(e^{At})_{t\geq 0}$ is strongly stable, then
$(A_d(\tau)^n)_{n \in \mathbb{N}_0}$ is  strongly stable \cite{Guo2006}.
Without the assumption on  $A^{-1}$, the norm-estimate
$\|A_d(\tau)^n\| = O(\log n)$ as $n \to \infty$ 
given in \cite{Gomilko2004}
remains the best so far in
the Hilbert case. 
For polynomially stable $C_0$-semigroups with parameter $\beta>0$
on Hilbert spaces, 
the following estimates have been obtained in \cite{Wakaiki2021JEE}:
\begin{itemize}
	\item
	$
	\|A_d(\tau)^nA^{-1}\| = 
	O\bigg(\left( \dfrac{\log n}{n}\right)^{1/(2+\beta)}\bigg)
	$ 
	if $(A_d(\tau)^n)_{n \in \mathbb{N}_0}$ is bounded. \vspace{3pt}
	\item
	$
	\|A_d(\tau)^nA^{-1}\| = 
	O\left(
	\dfrac{1}{n^{1/(2+\beta)}}
	\right)
	$
	if $A$ is normal.
\end{itemize}
For more information on
asymptotics of $(A_d(\tau)^n)_{n \in \mathbb{N}_0}$,
we refer to the survey~\cite{Gomilko2017}.

Some of the above results have been extended to 
the case of variable stepsizes.
For bounded $C_0$-semigroups on Banach spaces,
the Crank-Nicolson scheme with variable stepsizes has the same growth bound
as that with constant stepsizes; see  \cite{Bakarev2001}.
It has been proved that 
the solution $(x_n)_{n \in \mathbb{N}_0}$ of the time-varying 
difference equation \eqref{eq:diff_eq_intro}   is bounded 
if $A$ generates a bounded holomorphic $C_0$-semigroups on a Banach space 
\cite{Palencia1993, Piskarev2007, Casteren2011}
or if
$A$ and $A^{-1}$ generate bounded $C_0$-semigroups on a Hilbert space \cite{Piskarev2007}.
The stability property, i.e.,
$
\lim_{n \to \infty} x_n = 0
$
for all initial data $x_0$,
has been also obtained under some additional assumptions in \cite{Piskarev2007}.

We show that if $A$ and $A^{-1}$ are the generators of
an exponentially stable $C_0$-semigroup
and a bounded $C_0$-semigroup on a Hilbert space, respectively, then
the solution $(x_n)_{n \in \mathbb{N}_0}$ of the time-varying
difference equation \eqref{eq:diff_eq_intro} with  $x_0 \in D(A)$
satisfies
\begin{equation}
\label{eq:CN_exp}
\|x_n\| = O\left(
\sqrt{\frac{\log n}{n}}
\right)
\qquad (n \to \infty).
\end{equation}
Moreover, if $A$ is a 
normal operator generating
a polynomially stable $C_0$-semigroup with parameter
$\beta >0$ on a Hilbert space, then
\begin{equation}
\label{eq:CN_poly}
\|x_n\| = O\left(
\frac{1}{n^{1/(2+\beta)}}
\right)
\qquad (n \to \infty)
\end{equation}
for all $x_0 \in D(A)$.
These estimates \eqref{eq:CN_exp} and 
\eqref{eq:CN_poly} follow from the combination of
the norm-estimates \eqref{eq:Hilbert} and \eqref{eq:Hilbert_normal} 
of $e^{A^{-1}t}A^{-1}$ and
the Lyapunov-equation technique developed in \cite{Piskarev2007}.
In the constant case $\tau_n \equiv \tau >0$, we give a simple example of an exponentially stable $C_0$-semigroup showing that the decay rate $\|A_d(\tau)^n A^{-1}\| = O(1/\sqrt{n})$
cannot in general be improved.
The estimate \eqref{eq:CN_exp} includes the logarithmic factor $\sqrt{\log n}$, but
its necessity remains open.
On the other hand, the decay rate given in \eqref{eq:CN_poly}
is the same as that obtained for constant stepsizes in
\cite{Wakaiki2021JEE}, and one cannot in general replace 
$n^{-1/(2+\beta)}$ in \eqref{eq:CN_poly} by
functions with better decay rates.

This paper is organized as follows. In Section~\ref{sec:inverse_generator},
we give long-time norm-estimates for $e^{A^{-1}t}A^{-1}$.
We consider  first exponentially stable $C_0$-semigroups and then
polynomially stable $C_0$-semigroups. In Section~\ref{sec:CNS},
the decay rate estimates of $\|e^{A^{-1}t}A^{-1}\|$ are utilized 
in order to examine the quantified asymptotic behavior of the Crank-Nicolson
scheme with smooth initial data.

\paragraph{Notation}
Let
$\mathbb{C}_+ \coloneqq 
\{
\lambda \in \mathbb{C}: \re \lambda >0
\}$, 
$\overline{\mathbb{C}}_+ \coloneqq 
\{
\lambda \in \mathbb{C}: \re \lambda \geq 0
\}$, 
$\mathbb{C}_- \coloneqq 
\{
\lambda \in \mathbb{C}: \re \lambda <0
\}$, 
and 
$i \mathbb{R} \coloneqq \{ i \eta : \eta \in \mathbb{R}  \}$.
Let $\mathbb{N}_0$ and $\mathbb{R}_+$ be 
the set of nonnegative integers and the set of nonnegative real numbers,
respectively.
For real-valued functions $f,g$ on $J \subset \mathbb{R}$, we write
\[
f(t) = O\big(g(t)\big)\qquad (t \to \infty)
\]
if there exist constants $M >0$ and $t_0 \in J$ such that 
$f(t) \leq M g(t)$ for all $t \in J$ satisfying $t \geq t_0$.
The gamma function is denoted by $\Gamma$, i.e.,
\[
\Gamma(\alpha ) = \int^{\infty}_0 t^{\alpha -1} e^{-t}dt,\quad \alpha > 0.
\]
Let $H$ be a Hilbert space. The inner product of $H$ is denoted 
by $\langle \cdot, \cdot \rangle$.
The space of bounded linear operators on $H$ is denoted by $\mathcal{L}(H)$.
For a linear operator $A$ on $H$, let $D(A)$ and $\sigma(A)$
denote the domain and the spectrum of $A$, respectively.
For a densely-defined linear operator $A$ on $H$,
we denote by $A^*$ the Hilbert space adjoint of $A$.
If $A$ is the generator of a bounded $C_0$-semigroup on a Hilbert space
and if $A$ is injective, 
the fractional power $(-A)^{\alpha}$ of $-A$ is defined by
the sectorial functional calculus 
for $\alpha\in \mathbb{R}$ as in \cite[Chapter~3]{Haase2006}.
Let $\ell^2$ be the space of complex-valued square-summable sequences
$x = (x_n)_{n \in \mathbb{N}}$
endowed with the norm $\|x\| \coloneqq
\sqrt{\sum_{n=1}^{\infty} |x_n|^2}$. 

\section{Norm-Estimates of Semigroups Generated by Inverse Generators}
\label{sec:inverse_generator}
The aim of this section is to derive a norm-estimate for
$e^{A^{-1}t} (-A)^{-\alpha}$ with $\alpha >0$.
First we assume that 
$A$ is the generator of an exponentially stable $C_0$-semigroup 
on a Hilbert space.
Next we focus on the generator $A$ of a polynomially stable $C_0$-semigroup on 
a Hilbert space.

\subsection{Inverses of generators of exponentially stable semigroups}
Let $A$ be the generator of 
an exponentially stable $C_0$-semigroup on a Hilbert space.
Then $A$ is invertible, and $A^{-1}$ generates a $C_0$-semigroup 
$(e^{A^{-1}t})_{t \geq 0}$.
To obtain a norm-estimate for $e^{A^{-1}t} (-A)^{-\alpha}$,
we employ the $\mathcal{B}$-calculus introduced in  \cite{Batty2021}.

\subsubsection{Basic facts on $\mathcal{B}$-calculus}
Let $\mathcal{B}$ be the space of all holomorphic functions
$f$ on $\mathbb{C}_+$ such that 
\[
\|f\|_{\mathcal{B}_0} \coloneqq 
\int^{\infty}_0 \sup_{\eta \in \mathbb{R}} |f'(\xi+i\eta)|d\xi < \infty.
\]
We recall elementary properties of functions $f$ in $\mathcal{B}$.
The proof can be found in \cite[Proposition~2.2]{Batty2021}.
\begin{proposition}
	For $f \in \mathcal{B}$, the following statements hold:
	\begin{enumerate}
		\renewcommand{\labelenumi}{(\roman{enumi})}
		\item $f(\infty) \coloneqq \lim_{\re z \to \infty} f(z)$ exists in $\mathbb{C}$.
		\item $f$ is bounded on $\mathbb{C}_+$, and $\|f\|_{\infty} \coloneqq
		\sup_{z \in \mathbb{C}_+} |f(z)| \leq |f(\infty)| + \|f\|_{\mathcal{B}_0} $.
		\item
		$f(i\eta) \coloneqq \lim_{\xi \to 0 +} f(\xi+i\eta)$ exists, uniformly for $\eta \in \mathbb{R}$.
	\end{enumerate}
\end{proposition}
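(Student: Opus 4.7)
The plan hinges on a single horizontal-segment estimate. For $0 < \xi_1 < \xi_2$ and any $\eta \in \mathbb{R}$, the fundamental theorem of calculus applied along the segment from $\xi_1 + i\eta$ to $\xi_2 + i\eta$ gives
\[
|f(\xi_2 + i\eta) - f(\xi_1 + i\eta)| = \left| \int_{\xi_1}^{\xi_2} f'(s+i\eta)\, ds \right| \leq \int_{\xi_1}^{\xi_2} \phi(s)\, ds,
\]
where $\phi(s) := \sup_{\eta \in \mathbb{R}} |f'(s+i\eta)|$ is the integrand in the definition of $\|f\|_{\mathcal{B}_0}$. Since $\phi \in L^1(0, \infty)$ by assumption, this single inequality will drive all three claims.

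Part (iii) will fall out immediately: the right-hand side tends to $0$ as $\xi_1, \xi_2 \to 0+$ by absolute continuity of the integral, uniformly in $\eta$, so $\xi \mapsto f(\xi + i\eta)$ is uniformly Cauchy as $\xi \to 0+$, and hence $f(i\eta) := \lim_{\xi \to 0+} f(\xi + i\eta)$ exists uniformly in $\eta$. For part (i) I would first apply the same inequality with $\xi_1, \xi_2 \to \infty$ to get, for each fixed $\eta$, the existence of $L(\eta) := \lim_{\xi \to \infty} f(\xi + i\eta)$, again uniformly as a Cauchy criterion. The one nontrivial step is showing that $L(\eta)$ is independent of $\eta$. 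Since $\phi \geq 0$ is integrable on $(0, \infty)$, we have $\liminf_{\xi \to \infty} \phi(\xi) = 0$, so one can choose a sequence $\xi_n \to \infty$ with $\phi(\xi_n) \to 0$. Integrating $f'$ along the vertical segment from $\xi_n$ to $\xi_n + i\eta$ (using the Cauchy--Riemann relation $\partial_y f = i f'$) yields
\[
|f(\xi_n + i\eta) - f(\xi_n)| \leq |\eta|\, \phi(\xi_n) \longrightarrow 0,
\]
forcing $L(\eta) = L(0) =: f(\infty)$. Taking $\xi_2 \to \infty$ in the key inequality with $\xi_1 = \xi$ then upgrades the convergence $f(\xi + i\eta) \to f(\infty)$ to uniformity in $\eta$.

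Part (ii) drops out from the same key inequality with $\xi_1 = \re z$ and $\xi_2 \to \infty$:
\[
|f(z) - f(\infty)| \leq \int_{\re z}^{\infty} \phi(s)\, ds \leq \|f\|_{\mathcal{B}_0},
\]
giving the claimed bound $\|f\|_{\infty} \leq |f(\infty)| + \|f\|_{\mathcal{B}_0}$. The only genuine obstacle is the $\eta$-independence step in part (i), which is resolved by the elementary $\liminf$ argument above; all other steps reduce to the fundamental theorem of calculus combined with integrability of $\phi$.
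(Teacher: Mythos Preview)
The paper does not supply its own proof of this proposition; it simply cites \cite[Proposition~2.2]{Batty2021}. Your argument is correct and self-contained: the horizontal-segment estimate
\[
|f(\xi_2+i\eta)-f(\xi_1+i\eta)|\le\int_{\xi_1}^{\xi_2}\phi(s)\,ds,\qquad \phi(s)\coloneqq\sup_{\eta\in\mathbb{R}}|f'(s+i\eta)|,
\]
together with $\phi\in L^1(0,\infty)$ immediately yields (ii) and (iii), and your $\liminf$ trick (picking $\xi_n\to\infty$ with $\phi(\xi_n)\to 0$ and integrating vertically) cleanly disposes of the only genuinely nontrivial point, namely the $\eta$-independence of the limit in (i). One minor remark: the existence of such a sequence $(\xi_n)$ relies on $\phi$ being measurable, which holds since $\phi$ is a supremum of continuous functions and hence lower semicontinuous; you might make this explicit. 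Otherwise the proof is complete and is essentially the argument one finds in the cited reference.
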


A norm on $\mathcal{B}$ is defined by
\[
\|f\|_{\mathcal{B}} \coloneqq \|f\|_{\infty} + \|f\|_{\mathcal{B}_0},
\]
which is equivalent to each norm of the form
\[
|f(z_0)| +  \|f\|_{\mathcal{B}_0}\qquad (z_0 \in \overline{\mathbb{C}}_+ \cup \{
\infty
\}).
\]
The space $\mathcal{B}$ equipped with the norm $\|\cdot\|_{\mathcal{B}}$ 
is a Banach algebra by
\cite[Proposition~2.3]{Batty2021}.

Let $\textrm{M}(\mathbb{R}_+)$ be the Banach algebra of all bounded Borel measures on
$\mathbb{R}_+$ under convolution, endowed with the norm
$\|\mu\|_{\textrm{M}(\mathbb{R}_+)} \coloneqq
|\mu|(\mathbb{R}_+)$,
where $|\mu|$ is the total variation of $\mu$. For $\mu \in \textrm{M}(\mathbb{R}_+)$, 
the Laplace transform of $\mu$ is the function
\[
\mathcal{L}\mu\colon \overline{\mathbb{C}}_+ \to \mathbb{C},\quad 
(\mathcal{L}\mu)(z) = \int_{\mathbb{R}_+} e^{-zt} \mu(dt).
\] 
We define
\[
\mathcal{LM} \coloneqq \{ 
\mathcal{L}\mu : \mu \in \textrm{M}(\mathbb{R}_+)
\}.
\]
Then the space  $\mathcal{LM}$ endowed with 
the norm 
$\|\mathcal{L}\mu\|_{\textrm{HP}} \coloneqq \|\mu\|_{\textrm{M}(\mathbb{R}_+)}$
becomes a Banach algebra. As shown on 
\cite[p.~42]{Batty2021},
$\mathcal{LM}$ 
is a subspace of $\mathcal{B}$ with continuous inclusion.

Let $-B$ be the generator of a bounded $C_0$-semigroup $(e^{-Bt})_{t \geq 0}$ on a 
Hilbert space
$H$. 
Define $K \coloneqq \sup_{t \geq 0} \|e^{-Bt}\|$.
Using Plancherel's theorem and the Cauchy-Scwartz inequality,
we obtain
\[
\int_{\mathbb{R}} \left|
\langle
(\xi + i \eta +B)^{-2}x,y
\rangle
\right| d\eta \leq \frac{\pi K^2}{\xi } \|x\|\, \|y\|
\]
for all $x,y \in H$ and $\xi >0$; see \cite[Example~4.1]{Batty2021}.
From this observation, we 
define 
\begin{align}
\langle
f(B)x,y 
\rangle \coloneqq
f(\infty) \langle x,y \rangle -
\frac{2}{\pi}
\int^{\infty}_0 \xi 
\int_{\mathbb{R}} \langle
(\xi - i\eta + B)^{-2} x, y
\rangle f'(\xi+i\eta) d\eta d\xi
\label{eq:functional_def}
\end{align}
for $f \in \mathcal{B}$ and $x,y \in H$. 
This definition yields a bounded
functional calculus; see \cite[Theorem~4.4]{Batty2021} for the proof.
\begin{theorem}
	\label{thm:B_calculus}
	Let $-B$ be the generator of
	a bounded $C_0$-semigroup $(e^{-Bt})_{t \geq 0}$ on a 
	Hilbert space $H$, and let $f(B)$ be defined as in \eqref{eq:functional_def}.
	Then the following statements hold:
	\begin{enumerate}
		\renewcommand{\labelenumi}{(\roman{enumi})}
		\item The map $\Phi_B\colon f \mapsto f(B)$ is a bounded algebra homomorphism from $\mathcal{B}$ into $\mathcal{L}(H)$.
		\item If $f = \mathcal{L}\mu \in \mathcal{LM}$, then $f(B)$ coincides with the operator
		\[
		x \mapsto \int_{\mathbb{R}_+} e^{-Bt}x \mu(dt)
		\]
		as defined in the Hille-Phillips calculus.
		\item 
		If $K \coloneqq \sup_{t \geq 0} \|e^{-Bt}\|$, then
		\[
		\|f(B)\| \leq |f(\infty)| + 2K^2\|f\|_{\mathcal{B}_0} \leq 2K^2\|f\|_{\mathcal{B}}
		\]
		for all $f \in \mathcal{B}$.
	\end{enumerate}
\end{theorem}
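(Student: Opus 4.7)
The plan is to establish the three assertions in the order (iii), (ii), (i), since each builds on the previous.

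\emph{Norm bound (iii).} I would start here because it comes essentially for free. Fix $x, y \in H$ with $\|x\|, \|y\| \leq 1$. Applying the triangle inequality to the defining sesquilinear form and inserting the bound $\int_{\mathbb{R}} |\langle (\xi - i\eta + B)^{-2}x, y\rangle|\, d\eta \leq \pi K^{2}/\xi$ stated in the excerpt, the factor of $\xi$ in the outer integrand cancels $1/\xi$, giving
\[
|\langle f(B)x,y \rangle| \leq |f(\infty)| + \frac{2}{\pi} \int_0^{\infty} \xi \cdot \frac{\pi K^{2}}{\xi}\sup_{\eta \in \mathbb{R}}|f'(\xi+i\eta)|\, d\xi = |f(\infty)| + 2K^{2} \|f\|_{\mathcal{B}_0}.
\]
Taking suprema yields (iii), shows that $\Phi_B$ maps $\mathcal{B}$ into $\mathcal{L}(H)$ boundedly with $\|\Phi_B\|\leq 2K^2$, and makes linearity obvious from the formula; unitality $\Phi_B(1)=I$ is checked by substituting the constant $f \equiv 1$ (for which $f'=0$ and $f(\infty)=1$).

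\emph{Hille--Phillips compatibility (ii).} Writing $f = \mathcal{L}\mu$, one has $f(\infty) = \mu(\{0\})$ and $f'(\xi + i\eta) = -\int_{\mathbb{R}_+} t e^{-(\xi+i\eta)t}\,\mu(dt)$. I would substitute this, together with the Laplace representation $(\xi - i\eta + B)^{-2}x = \int_0^{\infty} s e^{-(\xi - i\eta)s}e^{-Bs}x\, ds$ (valid since $-B$ generates a bounded semigroup and $\xi>0$), into the defining formula. Absolute convergence of the four-fold iterated integral is controlled by $\xi s e^{-\xi(s+t)} K$ and justifies Fubini. The $\eta$-integral of $e^{-i\eta(s-t)}$ against the remaining integrand collapses, via Fourier inversion on the line, to concentration on $s = t$; performing the surviving $\xi$- and $s$-integrations reduces the expression to $\int_{(0,\infty)}\! e^{-Bs}x\,\mu(ds)$, which combined with the boundary term $\mu(\{0\})\langle x,y\rangle$ recovers the Hille--Phillips integral $\int_{\mathbb{R}_+}\! e^{-Bs}x\,\mu(ds)$.

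\emph{Multiplicativity (i).} This is the main obstacle. My strategy is approximation by horizontal shifts. For $\varepsilon > 0$ and $f \in \mathcal{B}$, set $f_{\varepsilon}(z) := f(z + \varepsilon)$. By Plancherel's theorem applied on the line $\{\re z = \varepsilon/2\}$, $f_{\varepsilon}$ admits a representation as the Laplace transform of an $L^{1}(\mathbb{R}_+)$ density plus a point mass $f(\infty)\delta_0$, hence $f_{\varepsilon} \in \mathcal{LM}$. By (ii) the $\mathcal{B}$-calculus agrees with the (already multiplicative) Hille--Phillips calculus on $\mathcal{LM}$, so $(f_{\varepsilon}g_{\varepsilon})(B) = f_{\varepsilon}(B)g_{\varepsilon}(B)$. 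I would then pass $\varepsilon \to 0^{+}$: the domination $\sup_{\eta}|f_{\varepsilon}'(\xi+i\eta)| \leq \sup_{\eta}|f'(\xi+i\eta)|$ (which lies in $L^{1}(\mathbb{R}_+, d\xi)$) and pointwise convergence of the integrand yield, by dominated convergence in the defining formula, $f_{\varepsilon}(B) \to f(B)$ in the weak operator topology, which the uniform bound $\|f_{\varepsilon}(B)\| \leq |f(\infty)| + 2K^{2}\|f\|_{\mathcal{B}_0}$ from Step 1 upgrades to strong convergence. Since $(fg)_{\varepsilon} = f_{\varepsilon}g_{\varepsilon}$, the left-hand side converges strongly to $(fg)(B)x$, while the uniform-boundedness-plus-strong-convergence principle gives $f_{\varepsilon}(B)g_{\varepsilon}(B)x \to f(B)g(B)x$. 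The delicate step is precisely this joint limit for the composition; ensuring the two approximations cooperate on the product side is where care with the estimates from Step 1 is essential.
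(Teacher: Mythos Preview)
The paper does not prove this theorem: immediately before the statement it writes ``see \cite[Theorem~4.4]{Batty2021} for the proof,'' and no argument is supplied. There is therefore no proof in the present paper to compare your attempt against.

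On the merits of your sketch: part~(iii) is correct, and part~(ii) has the right idea, though the ``$\eta$-integral collapses to $s=t$'' step should be carried out via Parseval's identity rather than as a formal Fourier inversion, since $\int_{\mathbb{R}} e^{-i\eta(s-t)}\,d\eta$ is not a convergent integral. Part~(i), however, has two genuine gaps. First, your assertion that $f_\varepsilon \in \mathcal{LM}$ is not justified: Plancherel on a vertical line would at best place the boundary trace of $f_\varepsilon - f(\infty)$ in $L^2(\mathbb{R})$, and even that requires square-integrability along the line, which membership in $\mathcal{B}$ alone does not provide; in any case an $L^2$ conclusion does not yield an $L^1$ density, so you have not shown that $f_\varepsilon$ is the Laplace transform of a finite measure. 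Second, the inference ``weak operator convergence plus a uniform norm bound upgrades to strong convergence'' is false in general (unilateral shifts give a standard counterexample), and without strong convergence of at least one factor you cannot pass to the limit in the product $f_\varepsilon(B)\,g_\varepsilon(B)$. Multiplicativity is the substantive part of this theorem, and your approximation scheme as written does not close.
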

The  map
\[
\Phi_B : \mathcal{B} \to \mathcal{L}(H),\quad f \mapsto f(B)
\]
is called {\em the $\mathcal{B}$-calculus} for $B$.

\subsubsection{Norm-estimate by $\mathcal{B}$-calculus}
By Theorem~\ref{thm:B_calculus} (iii),
the norm-estimate for a function 
$f \in \mathcal{B}$ implies the 
norm-estimate for the corresponding operator $f(B)$.
In \cite[Lemma~3.4]{Batty2021},
the estimate 
\begin{align}
\label{eq:alp_zero}
\|h_t\|_{\mathcal{B}} 
= O\left(\log t\right)\qquad (t \to \infty),\qquad
\text{where $h_t(z) \coloneqq e^{-t/(z+1)}$,}
\end{align}
has been derived for the norm-estimate of $e^{A^{-1}t}$. 
To estimate $\|e^{A^{-1}t} (-A)^{-\alpha}\|$ for $\alpha >0$, 
the following result is used.
\begin{lemma}
	\label{lem:ftk_norm}
	For $t,\alpha > 0$, define
	\[
	f_{t,\alpha}(z) \coloneqq \frac{e^{-t/(z+1)}}{(z+1)^\alpha},\quad z \in \mathbb{C}_+.
	\]
	Then $f_{t,\alpha} \in \mathcal{B}$ for all $t,\alpha >0$.
	Moreover, 
	\begin{align}
	\label{eq:f_t_B0_norm}
	\|f_{t,\alpha}\|_{\mathcal{B}_0} = O\left(\frac{1}{t^{\alpha/2}}\right)\qquad (t \to \infty)
	\end{align}
	for each $\alpha >0$.
\end{lemma}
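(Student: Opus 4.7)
The plan is to bound $|f_{t,\alpha}'|$ pointwise on $\mathbb{C}_{+}$ and then integrate in $\xi$ after splitting into two regions determined by the critical scale of the Gaussian-like factor $e^{-t/(z+1)}$. A direct differentiation gives
$$
f_{t,\alpha}'(z) = \frac{e^{-t/(z+1)}}{(z+1)^{\alpha+2}}\bigl(t - \alpha(z+1)\bigr),
$$
and combined with the elementary bound $|e^{-t/(z+1)}|\leq 1$ on $\mathbb{C}_{+}$ (which follows from $\re(1/(z+1))>0$), this yields
$$
|f_{t,\alpha}'(\xi + i\eta)| \leq \frac{e^{-tu/r^{2}}}{r^{\alpha+2}}\bigl(t + \alpha r\bigr), \qquad u := \xi+1,\ r := |z+1| = \sqrt{u^{2}+\eta^{2}}.
$$
Taking the supremum over $\eta\in\mathbb{R}$ therefore reduces to maximising, for each fixed $u$, the model $r^{-\beta}e^{-tu/r^{2}}$ with $\beta\in\{\alpha+1,\alpha+2\}$ over $r\geq u$.

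The unconstrained critical point of the model sits at $r^{2}=2tu/\beta$ and lies inside the admissible range iff $u\lesssim t$; there the model equals $C_{\beta}(tu)^{-\beta/2}$. When $u\gtrsim t$, the maximum is attained at the endpoint $r=u$, giving $u^{-\beta}e^{-t/u}$. I would then split the outer integral at $u_{*}\sim t$. On the bulk $1\leq u\leq u_{*}$, the dominant contribution comes from the $\beta=\alpha+2$ piece, namely $C\,t^{-\alpha/2}u^{-(\alpha+2)/2}$; since $(\alpha+2)/2>1$, the $u$-integral is uniformly bounded in $t$ and contributes exactly $O(t^{-\alpha/2})$. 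The $\beta=\alpha+1$ piece is of smaller order ($t^{-(\alpha+1)/2}$ for $\alpha\geq 1$ and $t^{-\alpha}$ for $0<\alpha<1$). On the tail $u\geq u_{*}$, the substitution $v=t/u$ converts the boundary expression into an incomplete gamma integral of the form $t^{-\alpha}\int_{0}^{C}v^{\alpha-1}e^{-v}\,dv=O(t^{-\alpha})$, which is also absorbed by the target rate. In particular the estimate gives $\|f_{t,\alpha}\|_{\mathcal{B}_{0}}<\infty$ for every fixed $t>0$, so $f_{t,\alpha}\in\mathcal{B}$.

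The main obstacle is that the prefactor $t$ in the derivative cannot be controlled by the pointwise bound $|e^{-t/(z+1)}|\leq 1$ alone; one must exploit the Gaussian concentration at the scale $r\sim\sqrt{tu}$, where $e^{-tu/r^{2}}$ is still of order one while $r^{\alpha+2}$ supplies a factor of size $(tu)^{(\alpha+2)/2}$. The cancellation between the prefactor $t$ and $(tu)^{(\alpha+2)/2}$—leaving the clean rate $t^{-\alpha/2}$—is the heart of the argument; the remainder is elementary one-variable calculus together with careful bookkeeping between the bulk and tail regions.
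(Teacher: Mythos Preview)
Your proposal is correct and follows essentially the same route as the paper. Both arguments differentiate, split the derivative into the two pieces corresponding to exponents $\beta=\alpha+1$ and $\beta=\alpha+2$, maximise the model $r^{-\beta}e^{-tu/r^{2}}$ over $r\geq u$ (the paper writes this as $g_{t,s,\alpha}$ with $s=u$ and $r$ replaced by $\eta^{2}$), locate the free maximum at $r^{2}=2tu/\beta$, and then split the outer integral at $u\sim t$ into a bulk contribution of exact order $t^{-\alpha/2}$ from the $\beta=\alpha+2$ term and strictly smaller tail/secondary contributions. The only cosmetic slip is your claim that the $\beta=\alpha+1$ bulk term is $O(t^{-(\alpha+1)/2})$ for $\alpha\geq 1$; at the borderline $\alpha=1$ one actually picks up $O(t^{-1}\log t)$ (as the paper records for $F_{2}$), but this is still $o(t^{-1/2})$ and does not affect the conclusion.
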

\begin{proof}
	For $t >0$ and $\alpha > 1$,  define
	\[
	F_\alpha (t) \coloneqq
	\int^{\infty}_0 \sup_{\eta \in \mathbb{R}}
	\left|\frac{e^{-t/(\xi+i\eta+1)}}{(\xi+i\eta+1)^\alpha}\right| d\xi.
	\]
	Since
	\[
	f_{t,\alpha}'(z) = \frac{te^{-t/(z+1)} }{(z+1)^{\alpha+2}} - \frac{\alpha e^{-t/(z+1)} }{(z+1)^{\alpha+1}}
	\]
	for all $z \in \mathbb{C}_+$,
	we obtain
	\begin{equation}
	\label{eq:ftk_bound}
	\int^{\infty}_0 \sup_{\eta \in \mathbb{R}} |f_{t,\alpha}'(\xi+i\eta)|d\xi 
	\leq tF_{\alpha+2}(t) + \alpha F_{\alpha+1}(t)
	\end{equation}
	for all $t,\alpha > 0$.
	
	Let $\alpha>1$.
	Define 
	\[
	g_{t,s,\alpha}(r) \coloneqq \frac{e^{-ts/(s^2+r)}}{(s^2+r)^{\alpha/2}}
	\]
	for $t>0$, $s >1$, and $r \geq 0$.
	Then
	\[
	F_{\alpha}(t)
	=
	\int^{\infty}_1 \sup_{r \geq 0} g_{t,s,\alpha}(r)ds 
	\]
	for all $t > 0$.
	Put $c \coloneqq  \left(\alpha/(2e)\right)^{\alpha/2}$.
	We have that for each $s > 1$,
	\[
	\sup_{r \geq 0} g_{t,s,\alpha}(r) = 
	\begin{cases}
	g_{t,s,\alpha}(0) = \dfrac{e^{-t/s}}{s^\alpha}, & 0 < t \leq \dfrac{\alpha s}{2}, \vspace{6pt}\\
	g_{t,s,\alpha}\left(\dfrac{2ts}{\alpha}-s^2 \right) = \dfrac{c}{(ts)^{\alpha/2}}, &  t \geq \dfrac{\alpha s}{2} > \dfrac{\alpha}{2}.
	\end{cases}
	\]
	If $0 < t \leq \alpha/2$, then
	\begin{align*}
	\int^{\infty}_1 \sup_{r \geq 0} g_{t,s,\alpha}(r)ds &=
	\int^{\infty}_1 \frac{e^{-t/s}}{s^\alpha}ds 
	= 
	\frac{1}{t^{\alpha-1}} \int^t_0 u^{\alpha-2}e^{-u} du 
	\leq 
	\frac{\Gamma(\alpha-1)}{t^{\alpha-1}}.
	\end{align*}
	If $t > \alpha /2$, then
	\begin{align*}
	\int^{\infty}_1 \sup_{r \geq 0} g_{t,s,\alpha}(r)ds &=
	\frac{c}{t^{\alpha /2}} \int^{2t/\alpha}_{1} \frac{1}{s^{\alpha/2}}ds + 
	\int^{\infty}_{2t/\alpha } \frac{e^{-t/s}}{s^\alpha } ds.
	\end{align*}
	We write the first term on the right-hand side as
	\begin{align*}
	\frac{c}{t^{\alpha /2}} \int^{2t/\alpha }_{1} \frac{1}{s^{\alpha /2}}ds 
	&=
	\begin{cases}
	\dfrac{c \log t}{t}, & \alpha = 2, \vspace{6pt}\\
	\dfrac{c}{\alpha/2-1} \left(
	\dfrac{1}{t^{\alpha/2}} - \dfrac{(\alpha/2)^{\alpha/2-1}}{t^{\alpha-1}} 
	\right), & \alpha \not=2.
	\end{cases}
	\end{align*}
	The second term satisfies
	\begin{align*}
	\int^{\infty}_{2t/\alpha} \frac{e^{-t/s}}{s^\alpha} ds =
	\frac{1}{t^{\alpha-1}} \int^{\alpha/2}_0 u^{\alpha-2}e^{-u} du 
	\leq 
	\frac{\Gamma(\alpha-1)}{t^{\alpha-1}}.
	\end{align*}
	Therefore, 
	$(0\leq)\, F_\alpha(t) < \infty$  for each $t >0$, and 
	\begin{equation}
	\label{eq:Fk_order}
	F_\alpha(t) =
	\begin{cases}
	O\left(\dfrac{1}{t^{\alpha-1}}\right), & 1<\alpha < 2, \vspace{6pt}\\
	O\left(\dfrac{\log t}{t}\right), & \alpha = 2, \vspace{6pt}\\
	O\left(\dfrac{1}{t^{\alpha/2}}\right), & \alpha >2.
	\end{cases}
	\end{equation}
	
	From the estimate \eqref{eq:ftk_bound}, we obtain
	$f_{t,\alpha } \in \mathcal{B}$
	for all $t,\alpha >0$.
	Since \eqref{eq:Fk_order} yields
	\begin{align*}
	tF_{\alpha+2}(t) + \alpha F_{\alpha+1}(t) =
	O\left(\frac{1}{t^{\alpha/2}}\right)\qquad (t \to \infty)
	\end{align*}
	for all $\alpha >0$,
	we obtain the desired estimate \eqref{eq:f_t_B0_norm}.
\end{proof}

In \cite[Corollary~5.7]{Batty2021},
the $\mathcal{B}$-calculus and 
the norm-estimate \eqref{eq:alp_zero} of $h_t(z) = e^{-t/(z+1)}$ have been used to obtain the norm-estimate
for $e^{A^{-1}t}$. 
Analogously,
the following theorem gives the norm-estimate for $e^{A^{-1}t}(-A)^{-\alpha}$
with $\alpha > 0$. 
\begin{theorem}
	\label{thm:inv_gen_bound}
	Let $A$ be the generator of 
	an exponentially stable $C_0$-semigroup $(e^{At})_{t \geq 0}$
	on a Hilbert space $H$. 
	Then 
	\begin{align}
	\label{eq:Inv_decay}
	\big\|e^{A^{-1}t} (-A)^{-\alpha}\big\| = 
	O\left(
	\frac{1}{t^{\alpha/2}}
	\right)
	\qquad  (t \to \infty)
	\end{align}
	for all $\alpha > 0$.
\end{theorem}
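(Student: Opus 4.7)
The plan is to apply the bounded $\mathcal{B}$-calculus of Theorem~\ref{thm:B_calculus} to a rescaled version of $-A$, and to combine it with the norm estimate of Lemma~\ref{lem:ftk_norm}. This mirrors the strategy of \cite[Corollary~5.7]{Batty2021} for the case $\alpha=0$ recorded in \eqref{eq:alp_zero}. The key observation is that $e^{A^{-1}t}(-A)^{-\alpha}$ can be realised, up to constants, as $\Phi_B(f_{t,\alpha})$ for a suitable operator $B$.

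First, I would choose $\omega>0$ small enough that $e^{(A/\omega + I)t} = e^{t}e^{At/\omega}$ is uniformly bounded on $H$; such $\omega$ exists because $(e^{At})_{t\geq 0}$ is exponentially stable. Setting $B := -(A/\omega + I)$, the operator $-B$ generates a bounded $C_0$-semigroup, so Theorem~\ref{thm:B_calculus} yields a bounded algebra homomorphism $\Phi_B:\mathcal{B}\to\mathcal{L}(H)$. Factoring $f_{t,\alpha} = h_t\cdot g_\alpha$ with $h_t(z) = e^{-t/(z+1)}$ and $g_\alpha(z) = (z+1)^{-\alpha}$, the multiplicativity of $\Phi_B$ gives $\Phi_B(f_{t,\alpha}) = h_t(B)\,g_\alpha(B)$. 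Since $(B+I)^{-1} = -\omega A^{-1}$, compatibility of $\Phi_B$ with the Hille--Phillips calculus (as used in \cite[Corollary~5.7]{Batty2021}) provides $h_t(B) = e^{-t(B+I)^{-1}} = e^{\omega A^{-1}t}$, while compatibility with the sectorial fractional-power calculus gives $g_\alpha(B) = (B+I)^{-\alpha} = \omega^\alpha(-A)^{-\alpha}$. Combining these identifications, $\Phi_B(f_{t,\alpha}) = \omega^\alpha\, e^{\omega A^{-1}t}(-A)^{-\alpha}$.

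Finally, since $f_{t,\alpha}(\infty)=0$, Theorem~\ref{thm:B_calculus}(iii) together with Lemma~\ref{lem:ftk_norm} yields
\[
\big\|e^{\omega A^{-1}t}(-A)^{-\alpha}\big\| \leq 2K^2\omega^{-\alpha}\,\|f_{t,\alpha}\|_{\mathcal{B}_0} = O\!\left(\frac{1}{t^{\alpha/2}}\right)\quad(t\to\infty),
\]
where $K := \sup_{t\geq 0}\|e^{-Bt}\|$. Substituting $t\leftarrow t/\omega$ absorbs the factor $\omega$ into the implicit constant and produces \eqref{eq:Inv_decay}. The delicate point is the identification $\Phi_B(f_{t,\alpha}) = \omega^\alpha e^{\omega A^{-1}t}(-A)^{-\alpha}$: one must invoke the consistency of the $\mathcal{B}$-calculus with both the Hille--Phillips and the sectorial calculi so that $\Phi_B$ acts correctly on $h_t$ and $g_\alpha$ despite $B$ being unbounded. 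Since these consistency properties are already recorded in \cite{Batty2021}, this amounts to careful bookkeeping rather than new analysis, and once done the norm bound from Lemma~\ref{lem:ftk_norm} does the rest.
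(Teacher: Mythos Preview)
Your proposal is correct and follows essentially the same route as the paper: both set $B=-(\omega^{-1}A+I)$ for a suitable $\omega>0$, factor $f_{t,\alpha}=h_t\cdot(z+1)^{-\alpha}$, use multiplicativity of the $\mathcal{B}$-calculus and its compatibility with the Hille--Phillips calculus to identify $f_{t,\alpha}(B)$ with (a scalar multiple of) $e^{A^{-1}t}(-A)^{-\alpha}$, and then conclude via Theorem~\ref{thm:B_calculus}(iii) and Lemma~\ref{lem:ftk_norm}. The only cosmetic difference is that the paper normalises to $\omega=1$ at the outset and is more explicit about the step $g_\alpha(B)=(-A)^{-\alpha}$, writing $(z+1)^{-\alpha}$ as the Laplace transform of the gamma density $t\mapsto t^{\alpha-1}e^{-t}/\Gamma(\alpha)$ and invoking \cite[Proposition~3.3.5]{Haase2006}.
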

\begin{proof}
	There exist constants $K,\omega >0$ such that 
	\[
	\big\|e^{tA}\big\| \leq Ke^{-\omega t}
	\]
	for all $t \geq 0$.
	We may assume that $\omega = 1$ by replacing $A$ by $\omega^{-1}A$
	and $t$ by $\omega t$. 
	Note that $B\coloneqq -A-I$ generates a
	bounded $C_0$-semigroup $(e^{-tB})_{t \geq 0}$ with 
	$\sup_{t \geq 0} \|e^{-tB}\| \leq K$. 
	
	Take $t,\alpha > 0$ arbitrarily, and
	consider the functions 
	\[
	h_t(z) \coloneqq e^{-t/(z+1)},\quad 
	r_\alpha(z) \coloneqq \frac{1}{(z+1)^{\alpha}},\quad z \in \mathbb{C}_+.
	\]
	Then
	$h_t$ belongs to $\mathcal{LM} \subset \mathcal{B}$; 
	see \cite[Example~2.12]{Batty2021}.
	From the definition of the gamma function,
	we have that for all $z >0$,
	\[
	\Gamma(\alpha) = 
	\int^{\infty}_0 t^{\alpha-1} e^{-t}dt
	=
	(z+1)^{\alpha} \int^{\infty}_0 t^{\alpha-1} e^{-(z+1)t}dt.
	\]
	Hence, 
	the uniqueness theorem for holomorphic functions yields
	\[
	r_{\alpha} (z) = \frac{1}{\Gamma(\alpha)} \int^{\infty}_0 t^{\alpha-1}e^{-(z+1)t} dt
	\]
	for all $z\in \mathbb{C}_+$.
	This means that $r_{\alpha}$ is the Laplace transform of the function
	\[
	t\mapsto \frac{t^{\alpha-1} e^{-t}}{\Gamma(\alpha)},\quad t > 0.
	\]
	Hence $r_{\alpha}$ also belongs to  $\mathcal{LM} \subset \mathcal{B}$.
	
	Theorem~\ref{thm:B_calculus} (i)
	shows that the function $f_{t,\alpha} =h_t r_{\alpha}$
	satisfies
	\[
	f_{t,\alpha}(B) =  h_t(B)r_{\alpha}(B).
	\] 
	We obtain
	\[
	h_t(B) = e^{A^{-1}t}
	\]
	as shown in the proof of
	\cite[Corollary~5.7]{Batty2021}.
	By Theorem~\ref{thm:B_calculus} (ii),
	$r_{\alpha}(B)$
	coincides with the operator defined as the Hille-Phillips calculus.
	This and
	\cite[Proposition~3.3.5]{Haase2006}
	give
	\[
	r_{\alpha}(B) = (I+B)^{-\alpha} = (-A)^{-\alpha}.
	\]
	Therefore,
	\[
	f_{t,\alpha}(B) = e^{A^{-1}t}(-A)^{-\alpha}.
	\]
	Using Theorem~\ref{thm:B_calculus} (iii), we have
	\[
	\big\|e^{A^{-1}t}(-A)^{-\alpha}\big\| \leq |f_{t,\alpha}(\infty)| + 2K^2 \|f_{t,\alpha}\|_{\mathcal{B}_0} =
	2K^2 \|f_{t,\alpha}\|_{\mathcal{B}_0}.
	\]
	Thus, the desired estimate \eqref{eq:Inv_decay} holds
	by Lemma~\ref{lem:ftk_norm}.
\end{proof}

From the next example, we see that 
the norm-estimate \eqref{eq:Inv_decay}
cannot be improved in general.
\begin{example}
	\label{ex:exponential}
	Let $\gamma >0$ and
	set $\lambda_k \coloneqq -\gamma + ik$ for $k \in \mathbb{N}$.
	Define an operator $A$ on $\ell^2$ by
	\[
	(A\zeta_k)_{k \in \mathbb{N}} \coloneqq (\lambda_k \zeta_k)_{n \in \mathbb{N}}
	\]
	with domain
	\[
	D(A) \coloneqq \{ \zeta = (\zeta_k)_{k \in \mathbb{N}} \in \ell^2:  (\lambda_k \zeta_k)_{k \in \mathbb{N}} \in \ell^2  \}.
	\]
	Then
	\[
	\big\|e^{A^{-1}t} (-A)^{-\alpha}\big\| = \sup_{k \in \mathbb{N}} \frac{e^{t \re \lambda_k/ |\lambda_k|^2}}{|\lambda_k|^\alpha} =
	\sup_{k \in \mathbb{N}} \frac{e^{-\gamma t/(\gamma^2+k^2)}}{(\gamma^2+k^2)^{\alpha/2}} 
	\]
	for all $t,\alpha >0$.
	Put $f(w) \coloneqq w^{-\alpha/2}e^{-\gamma t/w}$ for $w \geq \gamma^2+1$. 
	Then
	\[
	\sup_{w \geq \gamma^2+1} f(w) =
	f\left(
	\frac{2\gamma t}{\alpha}
	\right)
	=
	\left(
	\frac{\alpha}{2e\gamma t}
	\right)^{\alpha/2}
	\]
	for all $t \geq \alpha(\gamma^2+1)/(2\gamma)$.
	From this,
	it follows that for all $k \in \mathbb{N}$ and 
	$t = \alpha(\gamma^2+k^2)/(2\gamma)$, 
	\[
	\big\|e^{A^{-1}t} (-A)^{-\alpha}\big\| = \left(
	\frac{\alpha}{2e\gamma t}
	\right)^{\alpha/2}.
	\]
	Hence 
	\[
	\liminf_{t \to \infty} 
	t^{\alpha/2} \big\|e^{A^{-1}t} (-A)^{-\alpha}\big\| \geq \left(
	\frac{\alpha}{2e\gamma}
	\right)^{\alpha/2},
	\]
	and the estimate \eqref{eq:Inv_decay}
	cannot be improved for this diagonal operator.
	Moreover, we see that the rate of 
	polynomial decay of $\|e^{A^{-1}t} (-A)^{-\alpha}\|$
	does not depend on the exponential 
	growth bound $-\gamma$ of the $C_0$-semigroup
	$(e^{tA})_{t \geq 0}$.
\end{example}

\subsection{Inverses of generators of polynomially stable semigroups}
We recall the definition of polynomially stable $C_0$-semigroups.
\begin{definition}
	A $C_0$-semigroup $(e^{At})_{t\geq 0}$
	on a Hilbert space  is {\em polynomially 
		stable with parameter $\beta >0$} if $(e^{At})_{t\geq 0}$ is 
	bounded and if
	\begin{equation}
	\label{eq:poly_estimate}
	\big\|e^{At}(I-A)^{-1}\big\| = O\left(\frac{1}{t^{1/\beta}}
	\right)\qquad (t \to \infty).
	\end{equation}
	A $C_0$-semigroup $(e^{At})_{t\geq 0}$
	is simply called  {\em polynomially 
		stable} if it is polynomially stable with some parameter $\beta>0$.
\end{definition}

Let $A$ be the generator of a polynomially stable semigroup on 
a Hilbert space $H$. 
By \cite[Theorem~1.1]{Batty2008},
$i\mathbb{R} \cap \sigma(A) = \emptyset$.
Then $A$
is invertible, 
and therefore $A^{-1}$ generates a $C_0$-semigroup on $H$.
When $A$ is normal, we 
obtain the rate of decay of $\|e^{A^{-1}t}(-A)^{-\alpha}\|$ as in the case of 
exponentially stable $C_0$-semigroups.
We also show that $\sup_{t \geq 0}\|e^{A^{-1}t}A^{-1}\| < \infty$
without assuming that $A$ is normal.

\subsubsection{Case where generators are normal}

When the generator $A$  
is a normal operator on a Hilbert space,
a spectral condition equivalent to
polynomial decay is known.
The proof can be found in \cite[Proposition~4.1]{Batkai2006}.
\begin{proposition}
	\label{prop:spectral_prop}
	Let $H$ be a Hilbert space and 
	let $A\colon D(A)\subset H \to H$ be a normal operator.
	Assume that 
	$\sigma(A) \subset \mathbb{C}_-$.
	For a fixed $\beta>0$,
	the $C_0$-semigroup $(e^{At})_{t\geq 0}$ 
	satisfies
	\begin{equation*}
	\big\|e^{At}A^{-1}\big\| = O\left(\frac{1}{t^{1/\beta}}
	\right)\qquad (t \to \infty)
	\end{equation*}
	if and only if there exist $C,\delta>0$ such that 
	\[
	|\im \lambda| \geq \frac{C}{|\re \lambda|^{1/\beta}}
	\]
	for all $\lambda \in \sigma(A)$ with $\re \lambda \geq -\delta$.
\end{proposition}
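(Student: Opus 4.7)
The plan is to reduce everything to a scalar problem via the spectral theorem for normal operators, so that operator norms become suprema over $\sigma(A)$. Since $A$ is normal and $\sigma(A) \subset \mathbb{C}_-$, we may write $A = \int_{\sigma(A)} \lambda \, dE(\lambda)$ for a spectral measure $E$, and the operator $e^{At}A^{-1}$ is a bounded Borel functional calculus applied to $A$. Consequently,
\[
\big\|e^{At}A^{-1}\big\| = \sup_{\lambda \in \sigma(A)} \frac{e^{t\re \lambda}}{|\lambda|}.
\]
So the proposition becomes a purely scalar equivalence between a uniform decay rate of $\lambda \mapsto e^{t\re\lambda}/|\lambda|$ on $\sigma(A)$ and a lower bound on $|\im \lambda|$ in terms of $|\re \lambda|$ near the imaginary axis.

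For the sufficiency ($\Leftarrow$), I would split $\sigma(A)$ into the part with $\re\lambda \leq -\delta$ and the part with $-\delta < \re\lambda <0$. On the first part, $e^{t\re\lambda}/|\lambda| \leq e^{-\delta t}/\delta$, which decays faster than any polynomial. On the second part, writing $x = -\re\lambda \in (0,\delta]$ and $y = |\im\lambda|$, the hypothesis gives $|\lambda| \geq y \geq Cx^{-1/\beta}$, so
\[
\frac{e^{t\re\lambda}}{|\lambda|} \leq \frac{x^{1/\beta}e^{-xt}}{C}.
\]
Maximizing $x \mapsto x^{1/\beta}e^{-xt}$ over $x>0$ (optimum at $x=1/(\beta t)$) produces a bound of order $t^{-1/\beta}$, exactly what is needed.

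For the necessity ($\Rightarrow$), suppose $\|e^{At}A^{-1}\| \leq M t^{-1/\beta}$ for $t \geq t_0$. For each $\lambda \in \sigma(A)$ with $x \coloneqq -\re\lambda$ sufficiently small, I would substitute the tailored choice $t = 1/(\beta x)$ into the scalar inequality $e^{-xt}/|\lambda| \leq Mt^{-1/\beta}$ to obtain
\[
\sqrt{x^2+y^2} \;\geq\; \frac{e^{-1/\beta}}{M\beta^{1/\beta}}\,\frac{1}{x^{1/\beta}}.
\]
Then I would dichotomize: if $y < x$, the left side is at most $\sqrt{2}\,x$, forcing $x$ to be bounded below by a positive constant, which is impossible once $x$ is chosen small; hence $y \geq x$ and the inequality yields $y \geq C x^{-1/\beta}$ for an appropriate $C$ and all $\lambda$ with $x \leq \delta$ for a suitably small $\delta$ (depending on $t_0$).

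The main technical point, and the only place one needs to be careful, is the choice of the test time $t$ in the necessity argument: taking $t = 1/(\beta x)$ is what translates the temporal decay rate $t^{-1/\beta}$ into the correct spectral shape $|\re \lambda|^{-1/\beta}$, and one must verify that this $t$ eventually exceeds $t_0$ as $x\to 0^+$, so that the hypothesis applies. Once that scaling is in place, everything else is elementary real-variable estimation; the use of normality is confined to the first step, where the spectral theorem turns operator norms into pointwise suprema.
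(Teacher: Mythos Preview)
The paper does not give its own proof of this proposition; it simply cites \cite[Proposition~4.1]{Batkai2006}. Your argument is correct and is precisely the standard route taken there: the spectral theorem reduces the operator norm to the scalar supremum $\sup_{\lambda\in\sigma(A)} e^{t\re\lambda}/|\lambda|$, after which the sufficiency follows from the one-variable optimization of $x^{1/\beta}e^{-xt}$ and the necessity from the scaling choice $t\sim 1/x$.
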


The next result gives an estimate for the rate of decay of 
$\|e^{A^{-1}t} (-A)^{-\alpha }\|$.
\begin{proposition}
	\label{prop:inv_gen_bound_normal}
	Let $H$ be a Hilbert space, and 
	let $A\colon D(A)\subset H \to H$ be  a normal operator generating
	a polynomially stable $C_0$-semigroup $(e^{At})_{t \geq 0}$ with 
	parameter $\beta>0$
	on $H$. 
	Then 
	\begin{align}
	\label{eq:Inv_decay_normal}
	\big\|e^{A^{-1}t} (-A)^{-\alpha }\big\| = 
	O\left(\frac{1}{t^{\alpha/(2+\beta)}}\right)
	\qquad  (t \to \infty)
	\end{align}
	for all $\alpha > 0$.
\end{proposition}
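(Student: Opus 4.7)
My approach is to exploit normality via the Borel functional calculus to reduce the operator norm to a scalar supremum over $\sigma(A)$, and then to apply the spectral characterization supplied by Proposition~\ref{prop:spectral_prop} and optimize. Since polynomial stability implies $0\in\rho(A)$, the operator $A^{-1}$ is bounded and normal, so for normal $A$ the functional calculus applied to $\phi_t(\lambda) := e^{t/\lambda}/(-\lambda)^{\alpha}$ gives
\[
\big\|e^{A^{-1}t}(-A)^{-\alpha}\big\| \;=\; \sup_{\lambda \in \sigma(A)} \frac{e^{t\re\lambda/|\lambda|^2}}{|\lambda|^{\alpha}}.
\]

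Writing $\lambda = -s + iq$ with $s \geq 0$, and noting that the polynomial stability of $(e^{At})_{t\geq 0}$ with parameter $\beta$ is equivalent (since $A^{-1}$ is bounded) to $\|e^{At}A^{-1}\| = O(t^{-1/\beta})$, Proposition~\ref{prop:spectral_prop} furnishes constants $C, \delta > 0$ such that $|q| \geq C/s^{1/\beta}$ whenever $\lambda = -s + iq \in \sigma(A)$ with $0 < s \leq \delta$. The task then reduces to bounding
\[
\sup \frac{e^{-ts/(s^2+q^2)}}{(s^2+q^2)^{\alpha/2}}
\]
over the admissible pairs $(s,q)$.

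I would split the analysis into the regimes $s \geq \delta$ and $0 < s < \delta$. For $s \geq \delta$, elementary calculus on the unimodal function $u \mapsto e^{-ts/u}/u^{\alpha/2}$ over $u = s^2 + q^2 \geq s^2$ yields the stronger bound $O(t^{-\alpha/2})$. For $s < \delta$ the spectral gap forces $s^2 + q^2 \geq C^2/s^{2/\beta}$, and balancing the exponential factor $e^{-ts/(s^2+q^2)}$ against this polynomial lower bound leads to the critical scale
\[
s_* := \left(\frac{\alpha C^2}{2t}\right)^{\beta/(\beta+2)};
\]
a further split at $s = s_*$ (in spirit analogous to the proof of Lemma~\ref{lem:ftk_norm}) produces the desired bound $O(t^{-\alpha/(\beta+2)})$. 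The proof is essentially a calculation, so the main obstacle is just the tidy execution of the case analysis; the degraded exponent $\alpha/(\beta+2)$ (versus the exponential case's $\alpha/2$) emerges precisely from matching the exponential factor $e^{-ts_*^{1+2/\beta}/C^2}$ with the polynomial factor $s_*^{-\alpha/\beta}$ at the critical scale $s_*$.
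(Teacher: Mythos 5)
Your proposal is correct and follows essentially the same route as the paper: reduce the operator norm to $\sup_{\lambda\in\sigma(A)}e^{t\re\lambda/|\lambda|^2}|\lambda|^{-\alpha}$ via normality, invoke Proposition~\ref{prop:spectral_prop}, and split the spectrum at $|\re\lambda|=\delta$. The only cosmetic difference is that in the regime $|\re\lambda|\le\delta$ the paper converts the spectral condition into the single-variable bound $|\re\lambda|\ge C^{\beta}/|\lambda|^{\beta}$ and evaluates $\sup_{s>0}e^{-C^{\beta}t/s^{2+\beta}}s^{-\alpha}$ in closed form, whereas you keep both variables and split at the critical scale $s_*$; both yield the same $O(t^{-\alpha/(2+\beta)})$ bound.
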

\begin{proof}
	For all nonzero $\lambda \in \mathbb{C}$, we obtain
	\[
	\lambda I - A^{-1} = \left(
	A - \frac{1}{\lambda } I
	\right)\lambda   A^{-1}, 
	\]
	which implies that $1/\lambda \in \sigma(A)$ if and only if $\lambda  \in \sigma(A^{-1})$.
	The normality of $A$ 
	implies 
	that of $A^{-1}$ by \cite[Theorem~5.42]{Weidmann1980}. 
	Fix $t,\alpha >0$, and
	define 
	\[
	f(\lambda) \coloneqq 
	\begin{cases}
	e^{-\lambda t}\lambda^\alpha, & \lambda 
	\in \mathbb{C} \setminus (-\infty,0], \\
	0, & \lambda = 0.
	\end{cases}
	\]
	Then $f(-A^{-1}) = e^{A^{-1}t} (-A)^{-\alpha}$.
	Moreover, $f(-A^{-1})$ is normal
	and \[
	f\big(\sigma(-A^{-1})\big) = \sigma\big(f(-A^{-1})\big);
	\] 
	see, e.g., 
	\cite[Theorem~4.5]{Haase2018}.
	Therefore,
	\begin{align*}
	\big\|e^{A^{-1}t}(-A)^{-\alpha}\big\| 
	=
	\sup_{\lambda \in \sigma(-A^{-1}) } |f(\lambda)| 
	=
	\sup_{\lambda \in \sigma(A) } \frac{e^{t \re \lambda /|\lambda|^2}}{|\lambda|^{\alpha}}.
	\end{align*}
	
	By Proposition~\ref{prop:spectral_prop},
	there exist $C,\delta >0$ such that 
	$|\im \lambda| \geq C|\re \lambda|^{-1/\beta}$ for all 
	$\lambda \in \sigma(A)$ with $|\re \lambda| \leq \delta$.
	If
	$\lambda \in \sigma(A)$ satisfies $|\re \lambda| \leq \delta$, then
	\[
	|\re \lambda| \geq \frac{C^{\beta}}{|\lambda|^{\beta}},
	\]
	and therefore
	\begin{align*}
	\frac{e^{t \re \lambda /|\lambda|^2}}{ |\lambda|^{\alpha} }
	\leq 
	\sup_{s >0} \frac{e^{-C^{\beta} t/s^{2+\beta}}}{ s^{\alpha}} 
	= 
	\left(
	\frac{\alpha}{(2+\beta)eC^{\beta}t}
	\right)^{\frac{\alpha}{2+\beta}} .
	\end{align*}
	On the other hand,
	for all $\lambda \in \sigma(A)$ with $|\re \lambda| > \delta$,
	we obtain
	\begin{align*}
	\frac{e^{t \re \lambda /|\lambda|^2}}{ |\lambda|^{\alpha} }
	\leq
	\sup_{s >0}  \frac{e^{-\delta t /s^2} }{ s^{\alpha} } 
	=
	\left(
	\frac{\alpha}{2e \delta t}
	\right)^{\frac{\alpha}{2}}.
	\end{align*}
	Thus,  the norm-estimate \eqref{eq:Inv_decay_normal} holds.
\end{proof}

As in the case of exponentially stable $C_0$-semigroups,
we present an example of an operator on $\ell^2$ showing that
the norm-estimate 
\eqref{eq:Inv_decay_normal} cannot be in general  improved.
\begin{example}
	\label{ex:poly_stable_inv}
	Set $\lambda_k \coloneqq -1/k+ ik$ for $k \in \mathbb{N}$.
	Define an operator $A$ on $\ell^2$ by
	\[
	(A\zeta_k)_{k \in \mathbb{N}} \coloneqq (\lambda_k \zeta_k)_{k \in \mathbb{N}}
	\]
	with domain
	\[
	D(A) \coloneqq \{ \zeta = (\zeta_k)_{k\in \mathbb{N}} \in \ell^2:  (\lambda_k \zeta_k)_{k \in \mathbb{N}} \in \ell^2  \}.
	\]
	By Proposition~\ref{prop:spectral_prop},
	$A$ is the generator of a polynomially stable $C_0$-semigroup with
	parameter $\beta = 1$.
	Let $t,\alpha >0$. We have 
	\[
	\big\|
	e^{A^{-1}t} (-A)^{-\alpha}
	\big\| = \sup_{k \in \mathbb{N}} \frac{e^{t \re \lambda_k/ |\lambda_k|^2}}{|\lambda_k|^\alpha} =
	\sup_{k \in \mathbb{N}} \frac{e^{-kt/(k^4+1)}}{(1/k^2+k^2)^{\alpha/2}}.
	\]
	Moreover,
	\[
	\frac{e^{-kt/(k^4+1)}}{(1/k^2+k^2)^{\alpha/2}} \geq  
	\frac{e^{-t/k^3}}{2^{\alpha /2} k^\alpha}
	\]
	for all $k \in \mathbb{N}$.
	Put $f(w) \coloneqq w^{-\alpha}e^{-t/w^3}$ for $w \geq 1$.
	Then
	\[
	\sup_{w \geq 1} f(w) =
	f\left(
	\left(\frac{3t}{\alpha}\right)^{1/3}
	\right)
	=
	\left(\frac{\alpha}{3et}\right)^{\alpha/3}
	\]
	for all $t \geq \alpha/3$.
	This implies that for all  $k \in \mathbb{N}$
	and $t = \alpha k^3 /3$, 
	\[
	\big\|e^{A^{-1}t} (-A)^{-\alpha}\big\| 
	\geq 
	\frac{1}{2^{\alpha/2}}
	\left(\frac{\alpha}{3et}\right)^{\alpha/3}.
	\]
	Therefore
	\[
	\liminf_{t \to \infty} 
	t^{\alpha/3} \big\|e^{A^{-1}t} (-A)^{-\alpha}\big\| \geq \frac{1}{2^{\alpha/2}}
	\left(\frac{\alpha}{3e}\right)^{\alpha/3},
	\]
	and the estimate \eqref{eq:Inv_decay_normal} cannot be improved
	for this  operator $A$.
\end{example}

\subsubsection{Norm-estimate through Lyapunov equations}
We show that 
if  $A$ is the generator of a polynomially stable $C_0$-semigroup
on a Hilbert space, then
$\sup_{t \geq 0}\|e^{A^{-1}t} A^{-1}\| <  \infty$.
To this end,
we use the following results on Lyapunov equations;
see \cite[Theorem~4.1.3, Corollary~6.5.1, and Theorem~6.5.2]{Curtain2020} 
for the proofs.

\begin{lemma}
	\label{lem:Lyapunov_exp}
	Let $A$ be the generator of an exponentially stable 
	semigroup $(e^{At})_{t \geq 0}$ on a Hilbert space $H$. Then
	there exists a unique self-adjoint, positive
	operator $P \in \mathcal{L}(H)$
	such that  $PD(A) \subset D(A^*)$ and 
	\begin{equation}
	\label{eq:Lyap_P_exp}
	A^*P + P A  = -I\quad \text{on $D(A)$}.
	\end{equation}
	Moreover, the operator $P$ is given by
	\begin{equation}
	\label{eq:Lyap_P_exp_int}
	Px = 
	\int^{\infty}_0 \big(e^{At} \big)^* e^{At}x dt
	\end{equation}
	for all $x \in H$.
\end{lemma}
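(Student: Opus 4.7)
The plan is to prove existence by exhibiting $P$ via the integral formula and to prove uniqueness by a standard Lyapunov-function argument. Since $(e^{At})_{t \geq 0}$ is exponentially stable, there exist $K, \omega > 0$ with $\|e^{At}\| \leq K e^{-\omega t}$. I would first define
\[
Px \coloneqq \int_0^\infty (e^{At})^* e^{At} x\, dt, \qquad x \in H,
\]
as a Bochner integral (or, equivalently, as a strong limit of truncated integrals). The bound $\|(e^{At})^* e^{At}\| \leq K^2 e^{-2\omega t}$ ensures the integral converges in operator norm, so $P \in \mathcal{L}(H)$ with $\|P\| \leq K^2/(2\omega)$. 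Self-adjointness follows since each integrand $(e^{At})^* e^{At}$ is self-adjoint, and positivity is immediate from $\langle Px, x\rangle = \int_0^\infty \|e^{At}x\|^2 dt \geq 0$.

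Next I would verify the Lyapunov equation. For $x, y \in D(A)$, the map $t \mapsto \langle e^{At}x, e^{At}y\rangle$ is $C^1$ with derivative $\langle e^{At}Ax, e^{At}y\rangle + \langle e^{At}x, e^{At}Ay\rangle$. Integrating from $0$ to $\infty$ and using exponential stability to kill the boundary term at infinity gives
\[
-\langle x,y\rangle = \langle PAx, y\rangle + \langle Px, Ay\rangle.
\]
This identity, rewritten as $|\langle Px, Ay\rangle| \leq (\|x\| + \|PAx\|)\,\|y\|$, shows that $y \mapsto \langle Px, Ay\rangle$ extends to a bounded conjugate-linear functional on $H$, so $Px \in D(A^*)$ with $A^*Px = -x - PAx$. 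Hence $PD(A) \subset D(A^*)$ and $A^*P + PA = -I$ on $D(A)$.

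For uniqueness, suppose $Q \in \mathcal{L}(H)$ is self-adjoint with $QD(A)\subset D(A^*)$ and $A^*Q + QA = -I$ on $D(A)$. For $x \in D(A)$ I would differentiate $t \mapsto \langle Qe^{At}x, e^{At}x\rangle$; using $e^{At}x \in D(A)$ and the Lyapunov identity for $Q$, the derivative equals $-\|e^{At}x\|^2$. Integrating from $0$ to $\infty$ and invoking exponential stability so that $\langle Qe^{At}x, e^{At}x\rangle \to 0$ yields $\langle Qx, x\rangle = \int_0^\infty \|e^{At}x\|^2 dt = \langle Px, x\rangle$ for all $x \in D(A)$. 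By density of $D(A)$ in $H$, continuity of $P$ and $Q$, and polarization using self-adjointness, this gives $Q = P$.

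The main obstacle is the bookkeeping in step two: showing $Px \in D(A^*)$ requires recognizing that the Lyapunov identity derived by differentiation exhibits the requisite bounded functional, rather than simply manipulating unbounded operators formally. A related subtlety in the uniqueness step is justifying the differentiation of $\langle Q e^{At}x, e^{At}x\rangle$ — this uses $e^{At}x \in D(A)$, the hypothesis $QD(A)\subset D(A^*)$, and the fact that $A^*$ is closed — but once these domain conditions are checked the rest is routine.
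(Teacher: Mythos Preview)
Your argument is correct and is essentially the standard textbook proof. The paper does not give its own proof of this lemma; it simply cites \cite[Theorem~4.1.3, Corollary~6.5.1, and Theorem~6.5.2]{Curtain2020}, where the same strategy (define $P$ by the integral, differentiate $t\mapsto \langle e^{At}x, e^{At}y\rangle$ to obtain the Lyapunov identity, and run the Lyapunov-function argument for uniqueness) is carried out.

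One small remark: the lemma asserts that $P$ is \emph{positive}, which here means strictly positive, i.e., $\langle Px,x\rangle>0$ for $x\neq 0$. Your computation gives $\langle Px,x\rangle = \int_0^\infty \|e^{At}x\|^2\,dt \geq 0$; to get strict positivity, just note that $e^{A0}x = x \neq 0$ and the integrand is continuous, so the integral is strictly positive. This is routine, but worth stating since positivity (hence invertibility of $P$) is sometimes used downstream.
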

To obtain $\sup_{t \geq 0}\|e^{A^{-1}t} A^{-1}\| <  \infty$, we shall use the fact that
$P \in \mathcal{L}(H)$ defined by
\eqref{eq:Lyap_P_exp_int} satisfies the Lyapunov equation 
\eqref{eq:Lyap_P_exp}.
\begin{lemma}
	\label{lem:Lyapunov}
	Let $A$ be the generator of a $C_0$-semigroup on a Hilbert space $H$, 
	and let 
	$C$ be a bounded linear operator from $H$ to another Hilbert space $Y$.
	If there exists a self-adjoint, non-negative operator $P \in \mathcal{L}(H)$
	such that $PD(A) \subset D(A^*)$ and
	\[
	A^*P + PA = -C^*C\quad \text{on $D(A)$},
	\]
	then
	\[
	\int^{\infty}_0 \big\|Ce^{At}x\big\|^2 dt \leq \langle x, Px \rangle
	\]
	for all $x \in H$. 
\end{lemma}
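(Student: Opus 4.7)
The plan is the classical Lyapunov-to-observability-Gramian argument: differentiate $t \mapsto \langle e^{At}x, Pe^{At}x\rangle$, recognize the derivative via the Lyapunov identity, and integrate.

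First I would fix $x \in D(A)$ and consider $\varphi(t) \coloneqq \langle e^{At}x, Pe^{At}x\rangle$. Since $e^{At}$ leaves $D(A)$ invariant, the orbit $e^{At}x$ lies in $D(A)$, and the hypothesis $PD(A) \subset D(A^*)$ lets us move $P$ through the inner product. A direct computation using the product rule and $\frac{d}{dt}e^{At}x = Ae^{At}x$ yields
\begin{equation*}
\varphi'(t) = \langle Ae^{At}x, Pe^{At}x\rangle + \langle e^{At}x, PAe^{At}x\rangle
= \langle e^{At}x, (A^*P+PA)e^{At}x\rangle
= -\|Ce^{At}x\|^2,
\end{equation*}
where in the middle step I used $Pe^{At}x \in D(A^*)$ to write $\langle Ae^{At}x, Pe^{At}x\rangle = \langle e^{At}x, A^*Pe^{At}x\rangle$, and in the last step the Lyapunov equation $A^*P + PA = -C^*C$ on $D(A)$.

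Next I would integrate from $0$ to $T$ to obtain
\begin{equation*}
\int_0^T \|Ce^{At}x\|^2\,dt = \langle x, Px\rangle - \langle e^{AT}x, Pe^{AT}x\rangle \leq \langle x, Px\rangle,
\end{equation*}
where the inequality uses the non-negativity of $P$. Letting $T \to \infty$ via monotone convergence gives the desired bound for $x \in D(A)$.

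To extend the inequality to all $x \in H$, I would exploit the density of $D(A)$ in $H$. Given $x \in H$, pick $x_n \in D(A)$ with $x_n \to x$. For each $t$, strong continuity of $(e^{At})_{t\geq 0}$ and boundedness of $C$ give $\|Ce^{At}x_n - Ce^{At}x\| \to 0$, while on any bounded interval $[0,T]$ the uniform bound $\|Ce^{At}\| \leq \|C\|\sup_{s\in[0,T]}\|e^{As}\|$ permits Fatou's lemma; the right-hand side $\langle x_n, Px_n\rangle$ converges to $\langle x, Px\rangle$ by boundedness of $P$. Passing to the limit in the inequality for $x_n$ first over $[0,T]$ and then letting $T \to \infty$ completes the proof. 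No step is really a major obstacle; the only subtlety is making sure the differentiation in the first step is justified, which is precisely why the assumption $PD(A) \subset D(A^*)$ is included in the hypothesis.
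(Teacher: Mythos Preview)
Your proof is correct and is precisely the standard Lyapunov-to-observability argument: differentiate $\langle e^{At}x,Pe^{At}x\rangle$, invoke the Lyapunov identity, integrate, use $P\geq 0$, and extend by density via Fatou. The paper does not give its own proof of this lemma but simply cites Curtain--Zwart~\cite{Curtain2020}; your argument is essentially the one found there, so there is nothing to compare.
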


We shall need the estimate for adjoint operators in the following lemma;
see 
\cite[Lemma~2.1]{Zwart2007SF} for the proof.
\begin{lemma}
	\label{lem:bounded_inv}
	Let $A$ be the generator of 
	a bounded $C_0$-semigroup $(e^{At})_{t \geq 0}$
	on a Hilbert space $H$, and let $K \coloneqq \sup_{t \geq 0} \|e^{At}\|$.
	Then for all $x \in H$ and $\xi,\gamma >0$, 
	\begin{equation}
	\label{eq:inv_int_estimate}
	\int^{\infty}_0
	\big\|
	e^{(\gamma A-\xi I)^{-1}t}(\gamma A - \xi I)^{-1}  x
	\big\|^2 dt \leq 
	\frac{K^2 \|x\|^2}{2\xi}.
	\end{equation}
	The same estimate holds for the adjoint.
\end{lemma}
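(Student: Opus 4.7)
The plan is to set $B := \gamma A - \xi I$ and convert the standard Lyapunov equation for $B$ into one for the bounded operator $B^{-1}$, so that Lemma~\ref{lem:Lyapunov} applies directly. First I would observe that $e^{Bt} = e^{-\xi t} e^{\gamma A t}$, so $\|e^{Bt}\| \leq K e^{-\xi t}$ and $(e^{Bt})_{t \geq 0}$ is exponentially stable. In particular $0 \in \rho(B)$, hence $B^{-1} \in \mathcal{L}(H)$ and $B^{-1}$ generates a uniformly continuous semigroup that commutes with $B^{-1}$ itself.

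Next I would invoke Lemma~\ref{lem:Lyapunov_exp} applied to $B$ to obtain a unique self-adjoint positive $P \in \mathcal{L}(H)$ with $PD(A) \subset D(A^*)$ satisfying
\[
B^* P + P B = -I \quad \text{on } D(A), \qquad Px = \int_0^\infty (e^{Bt})^* e^{Bt} x\,dt.
\]
From the integral representation and the exponential bound on $\|e^{Bt}\|$, I obtain directly $\langle x, Px\rangle \leq \tfrac{K^2}{2\xi}\|x\|^2$ for all $x \in H$.

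The key algebraic step is to derive a Lyapunov equation for $B^{-1}$. For arbitrary $x \in H$, $B^{-1}x \in D(B)$ and $PB^{-1}x \in D(B^*)$, so substituting $B^{-1}x$ into the Lyapunov equation yields $B^* P B^{-1} x + P x = -B^{-1}x$. Applying $(B^*)^{-1} = (B^{-1})^*$ to both sides gives
\[
(B^{-1})^* P + P B^{-1} = -(B^{-1})^* B^{-1} \qquad \text{on } H.
\]
This is exactly the hypothesis of Lemma~\ref{lem:Lyapunov} with generator $B^{-1}$ (whose domain is all of $H$) and output operator $C = B^{-1}$, hence
\[
\int_0^\infty \bigl\| B^{-1} e^{B^{-1}t} x \bigr\|^2 dt \leq \langle x, Px \rangle \leq \frac{K^2 \|x\|^2}{2\xi}.
\]
Since $B^{-1}$ commutes with $e^{B^{-1}t}$, the stated estimate follows. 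For the adjoint claim, I would apply the same argument to $A^*$, which generates a bounded $C_0$-semigroup on $H$ with the same bound $K$, noting that $\gamma A^* - \xi I = B^*$ and $(B^*)^{-1} = (B^{-1})^*$.

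The main obstacle is the domain-theoretic bookkeeping in the algebraic manipulation: one must confirm $PB^{-1}x \in D(B^*)$ before applying $B^*$ (guaranteed by $PD(A) \subset D(A^*)$) and use surjectivity of $B^* \colon D(B^*) \to H$ to justify cancellation after applying $(B^*)^{-1}$. Once these domain points are in place, the proof reduces to two invocations of the Lyapunov machinery in Section~2.
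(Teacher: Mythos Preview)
Your argument is correct. The paper does not give its own proof of this lemma but defers to \cite[Lemma~2.1]{Zwart2007SF}; the Lyapunov-equation manipulation you carry out (pass from $B^*P+PB=-I$ to $(B^{-1})^*P+PB^{-1}=-(B^{-1})^*B^{-1}$ and then apply Lemma~\ref{lem:Lyapunov}) is exactly the method in that reference, and the same manipulation reappears verbatim in Step~1 of the proof of Theorem~\ref{thm:bounded_general_poly}. So your approach is essentially the intended one.
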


We are now in a position to prove that 
$\sup_{t \geq 0} \|e^{A^{-1}t} A^{-1}\| < \infty$ if
$A$ is the generator of a polynomially stable $C_0$-semigroup.
Actually, we will make the slightly weaker assumption that 
$\|e^{At} (I-A)^{-1}\| = O((\log t)^{-\beta})$ as $t\to \infty$ for some $\beta >1$.
Notice that if the bounded $C_0$-semigroup $(e^{At})_{t \geq 0}$ satisfies
this estimate, then $i\mathbb{R} \cap \sigma(A) = \emptyset$ 
by \cite[Theorem~1.1]{Batty2008}.
The proof of the following theorem is inspired by the arguments in the proof of 
\cite[Theorem~2.2]{Zwart2007SF}.
\begin{theorem}
	\label{thm:bounded_general_poly}
	Let $A$ be the generator of
	a bounded $C_0$-semigroup 
	on a Hilbert space $H$. If there exists $\beta >1$ such that
	\begin{equation}
	\label{eq:eA_log_bound}
	\big\|e^{At} (I-A)^{-1}\big\| = O\left(
	\frac{1}{(\log t)^{\beta}}
	\right)\qquad (t \to \infty),
	\end{equation}
	then
	\begin{equation}
	\label{eq:eA_Ainv_bound}
	\sup_{t \geq 0} \big\|e^{A^{-1}t} A^{-1}\big\| < \infty.
	\end{equation}
	In particular, 
	if $A$ is the generator  
	of a polynomially 
	stable $C_0$-semigroup on $H$, then
	the estimate \eqref{eq:eA_Ainv_bound} holds.
\end{theorem}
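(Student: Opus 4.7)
The plan is to adapt the Lyapunov-equation argument of \cite[Theorem~2.2]{Zwart2007SF} to the present setting, where exponential stability is replaced by the weaker logarithmic decay \eqref{eq:eA_log_bound}; the extra factor $A^{-1}$ should compensate for this weakening. As a preliminary step, I would note that \eqref{eq:eA_log_bound} together with \cite[Theorem~1.1]{Batty2008} forces $i\mathbb{R}\cap\sigma(A)=\emptyset$, hence $0\in\rho(A)$, $A^{-1}\in\mathcal{L}(H)$, and $(e^{A^{-1}t})_{t\geq 0}$ is the uniformly continuous semigroup generated by the bounded operator $A^{-1}$.

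To obtain the quantitative bound \eqref{eq:eA_Ainv_bound}, my plan is to first produce $L^{2}(\mathbb{R}_+)$-estimates of the form $\int_{0}^{\infty}\|e^{A^{-1}t}A^{-k}x\|^{2}\,dt\leq C_{k}\|x\|^{2}$ for $k=1$ and $k=2$, and then convert them into the pointwise bound via the Hardy--Landau / Sobolev inequality
\[
\|g\|_{\infty}^{2}\leq 2\|g\|_{L^{2}(\mathbb{R}_+)}\,\|g'\|_{L^{2}(\mathbb{R}_+)},\qquad g(t):=\|e^{A^{-1}t}A^{-1}x\|,
\]
noting that $|g'(t)|\leq\|e^{A^{-1}t}A^{-2}x\|$ by Cauchy--Schwarz. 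The $L^{2}$-estimates themselves would come from a Lyapunov identity for $A^{-1}$: conjugating an identity of the form $A^{*}P+PA=-C^{*}C$ by $A^{-1}$ produces $(A^{-1})^{*}P+PA^{-1}=-(CA^{-1})^{*}(CA^{-1})$, whence Lemma~\ref{lem:Lyapunov}, applied to the generator $A^{-1}$, yields $\int_{0}^{\infty}\|CA^{-1}e^{A^{-1}t}x\|^{2}\,dt\leq\langle x,Px\rangle$ provided $P$ is a bounded nonnegative self-adjoint solution. The Cayley-like $L^{2}$-estimate of Lemma~\ref{lem:bounded_inv} would enter here as an auxiliary tool in the construction of $P$.

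The hypothesis $\beta>1$ is expected to enter sharply at the convergence of $P$: for the natural choice of $C$ forced by the requirement that $\|CA^{-1}e^{A^{-1}t}x\|$ dominate $\|e^{A^{-1}t}A^{-k}x\|$, the integrand in the defining formal integral $P=\int_{0}^{\infty}(e^{At})^{*}C^{*}Ce^{At}\,dt$ decays (up to an auxiliary resolvent factor from the Cayley shift) essentially like $(\log t)^{-\beta}/t$, and the substitution $u=\log t$ makes $\int^{\infty}(\log t)^{-\beta}/t\,dt$ finite precisely when $\beta>1$. The main technical obstacle I anticipate is the delicate choice of $C$: one has to simultaneously (i) make $P$ bounded, by exploiting \eqref{eq:eA_log_bound} together with Lemma~\ref{lem:bounded_inv}, (ii) guarantee that $CA^{-1}$ dominates the quantities $\|e^{A^{-1}t}A^{-k}x\|$ needed for the Hardy--Landau step, and (iii) justify the Lyapunov identity in the strong sense required by Lemma~\ref{lem:Lyapunov}. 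Once these ingredients are assembled, chaining the two $L^{2}$-bounds through the Hardy--Landau inequality yields \eqref{eq:eA_Ainv_bound}; the ``in particular'' clause then follows immediately since polynomial stability implies \eqref{eq:eA_log_bound} for every $\beta>0$, in particular for $\beta>1$.
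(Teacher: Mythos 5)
Your overall strategy---direct $L^2(\mathbb{R}_+)$-estimates for $e^{A^{-1}t}A^{-k}$ fed into the Hardy--Landau inequality---breaks down at its first step: the estimate $\int_0^\infty\|e^{A^{-1}t}A^{-1}x\|^2\,dt\leq C_1\|x\|^2$ is false under the hypotheses of the theorem, for every choice of the auxiliary operator $C$. Take the diagonal operator of Example~\ref{ex:poly_stable_inv}, with eigenvalues $\lambda_k=-1/k+ik$; it generates a polynomially stable semigroup (hence satisfies \eqref{eq:eA_log_bound} for every $\beta$), and, writing $e_k$ for the $k$-th unit vector of $\ell^2$,
\[
\int_0^\infty\big\|e^{A^{-1}t}A^{-1}e_k\big\|^2\,dt
=\frac{1}{|\lambda_k|^2}\int_0^\infty e^{-2t/(k|\lambda_k|^2)}\,dt
=\frac{1}{|\lambda_k|^2}\cdot\frac{k\,|\lambda_k|^2}{2}=\frac{k}{2}\longrightarrow\infty .
\]
So your requirements (i) and (ii) are jointly unsatisfiable for $k=1$: any bounded nonnegative $P$ with $\|CA^{-1}e^{A^{-1}t}x\|\gtrsim\|e^{A^{-1}t}A^{-1}x\|$ would imply the false inequality. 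The same computation shows that the $k=2$ estimate does hold here, so the Hardy--Landau step could at best deliver $\sup_{t\geq0}\|e^{A^{-1}t}A^{-2}\|<\infty$, or a bound on $\|e^{A^{-1}t}A^{-1}x\|$ of the form $\|Ax\|^{1/2}\|x\|^{1/2}$---both strictly weaker than \eqref{eq:eA_Ainv_bound}. Your heuristic for where $\beta>1$ enters (convergence of $\int^\infty(\log t)^{-\beta}t^{-1}\,dt$) also cannot be realized: without a spectral shift the formal Lyapunov integral $\int_0^\infty(e^{At})^*C^*Ce^{At}\,dt$ has integrand of size $(\log t)^{-2\beta}$, which is not integrable for any $\beta$.

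The paper's proof avoids this by never working with $A^{-1}$ itself. It forms the shifted operators $A-\xi I$, $\xi>0$, for which $P(\xi)x=\int_0^\infty e^{-2\xi t}(e^{At})^*e^{At}x\,dt$ is bounded and, thanks to \eqref{eq:eA_log_bound}, satisfies $\langle x,P(\xi)x\rangle\lesssim\|x\|^2+\xi^{-1}|\log\xi|^{-2\beta}\|Ax\|^2$; Lemma~\ref{lem:Lyapunov} then yields $\int_0^\infty\|e^{(A-\xi I)^{-1}t}(A-\xi I)^{-1}x\|^2\,dt\lesssim\xi^{-1}|\log\xi|^{-2\beta}\|Ax\|^2$, an $L^2$-bound with a \emph{controlled blow-up} as $\xi\to0^+$, paired with the dual bound of Lemma~\ref{lem:bounded_inv}. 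The pointwise conclusion is then extracted not by Hardy--Landau but by comparing $e^{(A-\xi_1I)^{-1}t}x$ and $e^{(A-\xi_2I)^{-1}t}x$ through the variation-of-constants formula, the resolvent identity and Cauchy--Schwarz, and telescoping along $\xi_n=\delta e^{-n}$; the hypothesis $\beta>1$ enters precisely through the convergence of $\sum_{n}|n-\log\delta|^{-\beta}$. Finally, $\sup_{t\geq0}\|e^{(A-\delta I)^{-1}t}A^{-1}\|<\infty$ is supplied by Theorem~\ref{thm:inv_gen_bound}, since $A-\delta I$ generates an exponentially stable semigroup. To repair your argument you would need to reintroduce this shift-and-telescope structure; the unshifted Lyapunov equation for $A^{-1}$ cannot carry the proof.
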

\begin{proof}
	The proof consists of three steps. In the first step,
	we prove an estimate analogous to \eqref{eq:inv_int_estimate} 
	for 
	bounded $C_0$-semigroups satisfying \eqref{eq:eA_log_bound}.
	In the next step, we obtain the norm-estimate of
	\[
	e^{(A-\xi_1I)^{-1}t}x - e^{(A-\xi_2I)^{-1}t} x
	\]
	for 
	$x \in D(A)$ and 
	$0< \xi_1,\xi_2 < \delta_0$, where $\delta_0 >0$ is some sufficiently small constant.
	This estimate is used  to compare 
	$e^{(A-\delta I)^{-1}t}x$ and  $e^{(A-\delta 
		e^{-N}I)^{-1}t}x$  for 
	$x \in D(A)$,
	$N \in \mathbb{N}$, and $0 < \delta < \delta_0$.
	Letting $N \to \infty$, we show 
	the desired estimate \eqref{eq:eA_Ainv_bound} in the last step.

	Step~1:
	Let $A$ be the generator of a bounded
	$C_0$-semigroup
	$(e^{At})_{t\geq 0}$ such that \eqref{eq:eA_log_bound} holds for some $\beta > 1$. 
	We write $K \coloneqq \sup_{t \geq 0} \|e^{At}\| $.
	By \eqref{eq:eA_log_bound},
	there exist $M_0 >0$ and $t_0 > 1$ such that 
	\begin{equation}
	\label{eq:log_decay}
	\big\|e^{A t}A^{-1}\big\| \leq \frac{M_0}{(\log t)^{\beta}}
	\end{equation}
	for all $t \geq t_0$. Choose $t_1 > \max\{t_0,e^{2\beta}\}$.
	There exists $M_1 >0$ such that
	\begin{equation}
	\label{eq:log_integral}
	\int_{t_1}^{\infty} \frac{e^{-\xi t}}{(\log t)^{2\beta}} dt \leq \frac{M_1}{\xi 
		|\log \xi|^{2\beta}}
	\end{equation}
	for all $0 < \xi < 1/t_1$; see \cite[Lemma~4.2]{Wakaiki2022_arXiv}.
	For $\xi >0$, define the operator $P(\xi) \in \mathcal{L}(H)$ by
	\[
	P(\xi) x \coloneqq \int^{\infty}_0 e^{-2\xi t}\big(e^{At} \big)^* e^{At}x dt,\quad 
	x \in H.
	\]
	From the estimates 
	\eqref{eq:log_decay} and \eqref{eq:log_integral}, we obtain
	\begin{align*}
	\langle x, P(\xi) x\rangle
	&=
	\int^{t_1}_0 e^{-2\xi t} \big\|e^{At}x\big\|^2 dt + 
	\int^{\infty}_{t_1} e^{-2\xi t} \big\|e^{A t}x\big\|^2 dt \\
	&\leq 
	t_1 K^2 \|x\|^2 + 
	\frac{M_0^2 M_1 \|Ax\|^2 }{2 \xi |\log (2\xi)|^{2 \beta}}
	\end{align*}
	for all $x \in D(A)$ and $0 < \xi < 1/(2t_1) \eqqcolon \delta_0$. Hence
	there exists $K_1 >0$  such that 
	\begin{equation}
	\label{eq:xi_log_xi_bound}
	\sup_{0< \xi < \delta_0}
	\xi |\log \xi|^{2\beta}
	\langle
	x,P(\xi)x
	\rangle
	\leq K_1 \|Ax\|^2
	\end{equation}
	for all $x \in D(A)$.
	
	By Lemma~\ref{lem:Lyapunov_exp},  $P(\xi)D(A) \subset D(A^*)$ and 
	\begin{equation}
	\label{eq:Lyap_step1}
	(A- \xi I)^*P(\xi) + P(\xi)(A - \xi I) = -I\quad \text{on $D(A)$}
	\end{equation}
	for all $\xi >0$.
	Since \eqref{eq:Lyap_step1} yields
	\[
	(A^*- \xi I)^{-1}P(\xi) + P(\xi)(A - \xi I)^{-1} = 
	-(A^*- \xi I)^{-1}(A - \xi I)^{-1},
	\]
	we have from
	Lemma~\ref{lem:Lyapunov} and the estimate \eqref{eq:xi_log_xi_bound} 
	that
	\begin{equation}
	\label{eq:square_int_norm1}
	\int^{\infty}_0
	\big\|
	e^{(A-\xi I)^{-1}t} (A- \xi I)^{-1}x
	\big\|^2 dt
	\leq \langle
	x,P(\xi)x
	\rangle \leq  
	\frac{K_1 \|Ax\|^2}{\xi |\log \xi|^{2\beta}}
	\end{equation}
	for all $x \in D(A)$ and $0 < \xi < \delta_0$.
	Moreover,
	by Lemma~\ref{lem:bounded_inv},
	there exists $K_2 >0$ such that 
	\begin{equation}
	\label{eq:square_int_norm2}
	\int^{\infty}_0
	\big\|
	e^{(A^*-\xi I)^{-1}t} (A^*- \xi I)^{-1}y
	\big\|^2 dt
	\leq 
	\frac{K_2  \|y\|^2}{\xi}
	\end{equation}
	for all $y \in H$ and $\xi >0$.

	Step~2:
	Take $0 < \xi_1,\xi_2 < \delta_0$.
	The variation of constants formula 
	yields
	\begin{align*}
	e^{(A-\xi_1 I)^{-1}t} x - e^{(A-\xi_2 I)^{-1}t}x  =
	\int^t_0  e^{(A-\xi_1 I)^{-1}(t-s)}  \big(
	(A-\xi_1 I)^{-1} - (A-\xi_2 I)^{-1}
	\big)  e^{(A-\xi_2 I)^{-1}s} xds
	\end{align*}
	for all $x \in H$ and $t \geq 0$.
	Moreover, the resolvent equation gives
	\[
	(A-\xi_1 I)^{-1} - (A-\xi_2 I)^{-1}
	= (\xi_1 - \xi_2) (A-\xi_1 I)^{-1} (A-\xi_2 I)^{-1}.
	\]
	Therefore,
	we obtain
	\begin{align*}
	e^{(A-\xi_1 I)^{-1}t} x - e^{(A-\xi_2 I)^{-1}t}x  =(\xi_1 - \xi_2) 
	\int^t_0  e^{(A-\xi_1 I)^{-1}(t-s)}  (A-\xi_1 I)^{-1} (A-\xi_2 I)^{-1}e^{(A-\xi_2 I)^{-1}s}xds
	\end{align*}
	for all $x \in H$ and $t \geq 0$.
	Using the estimates \eqref{eq:square_int_norm1} and \eqref{eq:square_int_norm2}, we have that 
	for all $x \in D(A)$, $y \in H$, and $t \geq 0$,
	\begin{align*}
	&\left|
	\left\langle
	y,~
	e^{(A-\xi_1 I)^{-1}t} x - e^{(A-\xi_2 I)^{-1}t} x
	\right\rangle
	\right| \\
	&\quad =
	|\xi_1 - \xi_2|\,
	\bigg|
	\int^t_0
	\Big\langle
	e^{(A^*-\xi_1 I)^{-1}(t-s)} (A^*- \xi_1 I)^{-1}y,\,
	e^{(A-\xi_2 I)^{-1}s} (A- \xi_2 I)^{-1}x
	\Big\rangle
	ds
	\bigg| \\
	&\quad \leq
	|\xi_1 - \xi_2|
	\left(
	\int^{\infty}_0
	\big\|
	e^{(A^*-\xi_1 I)^{-1}t} (A^*- \xi_1 I)^{-1}y
	\big\|^2 dt
	\right)^{1/2}  \left(
	\int^{\infty}_0
	\big\|
	e^{(A-\xi_2 I)^{-1}t} (A- \xi_2 I)^{-1}x
	\big\|^2 dt
	\right)^{1/2} \\
	&\quad \leq
	\frac{ K_0|\xi_1 - \xi_2| }{ 
		\sqrt{\xi_1\xi_2} \, |\log \xi_2|^{\beta}} \|Ax\| \, \|y\|,
	\end{align*}
	where $K_0 \coloneqq \sqrt{K_1K_2}$.
	Hence
	\begin{equation}
	\label{eq:inv_diff_bound}
	\big\|
	e^{(A-\xi_1 I)^{-1}t} x - e^{(A-\xi_2 I)^{-1}t} x 
	\big\|
	\leq 
	\frac{K_0|\xi_1 - \xi_2| }{ \sqrt{\xi_1\xi_2} \, |\log \xi_2|^{\beta} }\|Ax\|
	\end{equation}
	for all $x \in D(A)$ and $t \geq 0$.
	
	Step~3:
	Let $N \in \mathbb{N}$ and $0<\delta < \delta_0\, (< 1)$.
	Substituting $\xi_1 = \delta e^{-n+1}$ and $\xi_2 = \delta e^{-n}$, $n=1,2,\dots,N$, 
	into the estimate \eqref{eq:inv_diff_bound},
	we obtain
	\begin{align*}
	\big\|
	e^{(A- \delta I)^{-1}t} x  - e^{(A- \delta e^{-N} I)^{-1}t} x 
	\big\|
	& =
	\left\|
	\sum_{n=1}^N
	e^{(A- \delta e^{-n+1} I)^{-1}t} x - e^{(A-\delta e^{-n}  I)^{-1}t} x 
	\right\| \\
	& \leq 
	K_0 \|Ax\| \sum_{n=1}^N
	\frac{|e^{-n+1} - e^{-n}| }{\sqrt{e^{-n+1} e^{-n}}\, |\log (\delta  e^{-n})|^{\beta}} \\
	&\leq 
	\frac{(e-1)K_0}{\sqrt{e}  } 
	\|Ax\|\sum_{n=1}^{N} \frac{1}{|n - \log \delta |^{\beta}}
	\end{align*}
	for all $x \in D(A)$ and  $t \geq 0$.
	From the assumption $\beta > 1$, we obtain
	\[
	K_3 \coloneqq \sum_{n=1}^{\infty} \frac{1}{|n - \log \delta |^{\beta}} < \infty.
	\]
	Since
	\[
	\lim_{N \to \infty}
	\big\|e^{(A-\delta e^{-N}  I)^{-1}t} - e^{A^{-1}t}  \big\| = 0
	\]
	for each $t \geq 0$,
	it follows that
	\[
	\big\|
	e^{(A- \delta I)^{-1}t} x  - e^{A^{-1}t} x 
	\big\| \leq 
	\frac{(e-1)K_0K_3}{\sqrt{e} } 
	\|Ax\|
	\]
	for all $x \in D(A)$ and $t \geq 0$.
	This gives
	\[
	\sup_{t\geq 0}
	\big\|
	e^{(A-\delta  I)^{-1}t} A^{-1}  - e^{A^{-1}t} A^{-1} 
	\big\| < \infty.
	\]
	Since $A- \delta I$ generates an exponentially stable $C_0$-semigroup,
	Theorem~\ref{thm:inv_gen_bound} implies that 
	\[
	\sup_{t \geq 0 }\big\|e^{(A- \delta I)^{-1}t} A^{-1}\big\| < \infty.
	\]
	Thus,
	\begin{align*}
	\sup_{t \geq 0}
	\big\|
	e^{A^{-1}t} A^{-1}
	\big\|\leq \sup_{t \geq 0}
	\big\|
	e^{(A-\delta I)^{-1}t} A^{-1}
	\big\| + 
	\sup_{t \geq 0}\big\|
	e^{(A- \delta I)^{-1}t} A^{-1} - 
	e^{A^{-1}t} A^{-1}
	\big\| < \infty.\qedhere
	\end{align*}
\end{proof}

\section{Crank-Nicolson Scheme with Smooth Initial Data}
\label{sec:CNS}
Let $A$ be the generator of a bounded $C_0$-semigroup  on 
a Hilbert space $H$.  
For $\tau > 0$, we define
\begin{equation}
\label{eq:Ad_def}
A_d(\tau) \coloneqq \left(
I + \frac{\tau}{2}A
\right) \left(
I - \frac{\tau}{2}A
\right)^{-1}. 
\end{equation}
Let $(\tau_n)_{n \in \mathbb{N}_0}$ be a 
sequence of strictly positive real numbers.
We consider the time-varying difference equation
\begin{equation}
\label{eq:difference_equation}
x_{n+1} = A_d(\tau_n) x_n,\quad n \in \mathbb{N}_0;\qquad  x_0 \in H.
\end{equation}
In this section, we study the decay rate of the solution 
$(x_n)_{n \in \mathbb{N}_0}$ of the difference equation 
\eqref{eq:difference_equation} with smooth initial data.

\subsection{Generators of exponentially stable semigroups}
Let $H$ be a Hilbert space, and
let  $A\colon D(A) \subset H \to H$ 
be injective.
Suppose that 
$A$ and $A^{-1}$
generate bounded $C_0$-semigroups on $H$. 
Take $\xi >0$.
By Lemma~\ref{lem:Lyapunov_exp},
there  exist unique self-adjoint, positive
operators $P(\xi), Q(\xi) \in \mathcal{L}(H)$
such that  $P(\xi)D(A) \subset D(A^*)$, $Q(\xi)D(A^{-1}) \subset D((A^{-1})^*)$,
and 
\begin{subequations}
	\label{eq:PQ_Lyap}
	\begin{align}
	(A- \xi I)^*P(\xi) + P(\xi)(A - \xi I) &= -I\quad \text{on $D(A)$}, \\
	(A^{-1} - \xi I)^*Q(\xi) + Q(\xi)(A^{-1} - \xi I) &= -I\quad \text{on $D(A^{-1})$}.
	\end{align}
\end{subequations}
The operators $P(\xi)$ and $Q(\xi)$ are given by
\begin{subequations}
	\label{eq:PQ_def}
	\begin{align}
	P(\xi) x &= 
	\int^{\infty}_0 e^{-2\xi t} (e^{At})^* e^{At}x dt, \label{eq:P_def}\\
	Q(\xi) x &= 
	\int^{\infty}_0 e^{-2\xi t} (e^{A^{-1}t})^* e^{A^{-1}t}x dt
	\label{eq:Q_def}
	\end{align}
\end{subequations}
for all $x \in X$. 
To estimate the decay rate of the solution $(x_n)_{n \in \mathbb{N}_0}$ of the difference equation 
\eqref{eq:difference_equation},
we shall use that the operators
$P(\xi)$ and $Q(\xi)$ defined as in \eqref{eq:PQ_def}
solve the Lypapunov equations \eqref{eq:PQ_Lyap}.

In the proofs of \cite[Lemmas~2.1 and 2.2]{Piskarev2007},
the following result has been obtained from the Lypapunov equations \eqref{eq:PQ_Lyap}.
\begin{lemma}
	\label{lem:PZ_lemma}
	Let $A$ be the generator of a bounded $C_0$-semigroup on 
	a Hilbert space $H$. 
	Suppose that $A$ is injective and that $A^{-1}$ also generate a bounded $C_0$-semigroup on  $H$. 
	Let
	$0 < \tau_{\min} \leq \tau_n \leq \tau_{\max}
	< \infty$ for all $n \in \mathbb{N}_0$, and 
	let 
	$P(\xi), Q(\xi) \in \mathcal{L}(H)$ be given by \eqref{eq:PQ_def}
	for $\xi >0$. Define
	$R(r) \in \mathcal{L}(H)$ by 
	\[
	R(r) \coloneqq \frac{2}{\tau_{\min}}P\left(
	\frac{\xi_r}{\tau_{\max}}
	\right) + 
	2\tau_{\max} Q(\tau_{\min} \xi_r),\quad r \in (0,1),
	\]
	where
	\[
	\xi_r \coloneqq \frac{1-r^2}{2(r^2+1)}.
	\]
	Then
	there exists a constant $M >0$ such that 
	the solution $(x_n)_{n \in \mathbb{N}_0}$ of the difference equation
	\eqref{eq:difference_equation} satisfies
	\[
	|(n+1)r^n \langle y, x_n
	\rangle| \leq 
	\frac{M \|y\| }{\sqrt{1-r}} \sqrt{\langle 
		x_0, R(r)x_0
		\rangle}
	\] 
	for all $x_0, y \in X$, $n \in \mathbb{N}$, and $r \in (0,1)$.
\end{lemma}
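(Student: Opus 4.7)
The plan is to follow the discrete Lyapunov-function strategy of Piskarev~\cite{Piskarev2007}, adapted here to variable stepsizes. The first step is to derive discrete dissipation identities from the Lyapunov equations \eqref{eq:PQ_Lyap}. Introducing $w_n\coloneqq(I-(\tau_n/2)A)^{-1}x_n = (x_n+x_{n+1})/2\in D(A)$ and $v_n\coloneqq Aw_n = (x_{n+1}-x_n)/\tau_n\in D(A^{-1})$, and applying the polarization identity $\langle a, Za\rangle - \langle b, Zb\rangle = \re\langle a-b, Z(a+b)\rangle$ valid for self-adjoint $Z$, I would obtain
\begin{align*}
\langle x_n, P(\xi)x_n\rangle - \langle x_{n+1}, P(\xi)x_{n+1}\rangle &= \tau_n\|w_n\|^2 - 2\tau_n\xi\langle w_n, P(\xi)w_n\rangle, \\
\langle x_n, Q(\xi)x_n\rangle - \langle x_{n+1}, Q(\xi)x_{n+1}\rangle &= \tau_n\|v_n\|^2 - 2\tau_n\xi\langle v_n, Q(\xi)v_n\rangle,
\end{align*}
the first from the Lyapunov equation for $A-\xi I$ applied to $w_n\in D(A)$, and the second from the one for $A^{-1}-\xi I$ applied to $v_n\in D(A^{-1})$ together with $A^{-1}v_n = w_n$.

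Next, I would multiply each identity by $r^{2n}$ and sum over $n\geq 0$. Abel summation on the telescoping left-hand side (the tail $r^{2N}\langle x_{N+1}, Zx_{N+1}\rangle$ vanishing since the Crank--Nicolson iterates grow at most logarithmically by~\cite{Gomilko2004,Bakarev2001} while $r^{2N}$ decays geometrically) yields, for $Z=P(\xi)$, the estimate
\[
2\xi\sum_{n=0}^\infty r^{2n}\tau_n\langle w_n, P(\xi)w_n\rangle \leq \xi\tau_{\max}(1+r^{-2})\sum_{n=0}^\infty r^{2n}\langle x_n, P(\xi)x_n\rangle
\]
after applying operator convexity $\langle w_n, Pw_n\rangle\leq (\langle x_n, Px_n\rangle+\langle x_{n+1}, Px_{n+1}\rangle)/2$ and $\tau_n\leq\tau_{\max}$. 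The M\"obius-like choice $\xi_r=(1-r^2)/(2(r^2+1))$ with $\xi=\xi_r/\tau_{\max}$ is precisely engineered so that the right-hand side equals half of the Abel-summation remainder $(1-r^2)/r^2\sum r^{2n}\langle x_n, Px_n\rangle$; absorption then yields $\sum r^{2n}\tau_n\|w_n\|^2\leq \langle x_0, P(\xi_r/\tau_{\max})x_0\rangle/r^2$. A symmetric argument on the $Q$-identity, with $\|v_n\|^2\leq 2(\langle x_n, Qx_n\rangle+\langle x_{n+1}, Qx_{n+1}\rangle)/\tau_n^2$, $\tau_n\geq\tau_{\min}$, and $\xi$ commensurate with $\tau_{\min}\xi_r$, produces the matching bound on $\sum r^{2n}\tau_n\|v_n\|^2$. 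The parallelogram identity $\|x_n\|^2+\|x_{n+1}\|^2 = 2\|w_n\|^2 + (\tau_n^2/2)\|v_n\|^2$, together with the weights $2/\tau_{\min}$ and $2\tau_{\max}$ in $R(r)$, then combines these into $\sum_{n=0}^\infty r^{2n}\|x_n\|^2\leq C\langle x_0, R(r)x_0\rangle$.

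Finally, using the telescoping identity $(n+1)x_n = \sum_{k=0}^n x_k + \sum_{j=0}^{n-1}(j+1)\tau_j v_j$, pairing with $y$, multiplying by $r^n$, and applying Cauchy--Schwarz with weights $r^{n-k}$ to each of the two resulting sums (invoking $\sum r^{2(n-k)}\leq 1/(1-r)$ and the two Stage-2 bounds after a secondary Abel summation producing the $(j+1)^2$-weighted bound on the $v$-sum), one arrives at the claimed estimate $|(n+1)r^n\langle y, x_n\rangle|\leq M\|y\|(1-r)^{-1/2}\sqrt{\langle x_0, R(r)x_0\rangle}$. The principal obstacle is the constant-matching in the Abel summation: the precise definition of $\xi_r$ is what allows the Lyapunov cross-terms to be absorbed by exactly half of the Abel remainder, producing the clean estimate needed for the subsequent pointwise argument.
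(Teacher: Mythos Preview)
The paper does not give its own proof of this lemma; it simply records that the result is contained in the proofs of \cite[Lemmas~2.1 and 2.2]{Piskarev2007}. Your proposal is precisely an attempt to reconstruct that argument, and the overall architecture---discrete dissipation identities from the two Lyapunov equations, Abel summation in $r^{2n}$ with the tuned parameter $\xi_r$, the parallelogram identity $\|x_n\|^2+\|x_{n+1}\|^2=2\|w_n\|^2+(\tau_n^2/2)\|v_n\|^2$, and the final telescoping $(n+1)x_n=\sum_{k\le n}x_k+\sum_{j<n}(j+1)\tau_jv_j$ followed by Cauchy--Schwarz---is exactly the Piskarev--Zwart strategy the paper is invoking. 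So at the level of approach there is nothing to compare: you are doing what the paper defers to.

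Two places in your sketch warrant tightening. First, in the $Q$-step you write ``$\|v_n\|^2\le 2(\langle x_n,Qx_n\rangle+\langle x_{n+1},Qx_{n+1}\rangle)/\tau_n^2$''; the left-hand side should be $\langle v_n,Q(\xi)v_n\rangle$, not $\|v_n\|^2$ (the former is what actually appears as the cross term in the dissipation identity, and the inequality then follows from $\langle a-b,Q(a-b)\rangle\le 2\langle a,Qa\rangle+2\langle b,Qb\rangle$). Second, with that corrected bound the absorption constant in the $Q$-case is a factor of $4$ rather than the $1$ you get in the $P$-case (midpoint convexity gives $\tfrac12$, the difference bound gives $2$), so the claim that the choice $\xi=\tau_{\min}\xi_r$ makes the cross term exactly ``half of the Abel remainder'' is not literally true for $Q$; you will need either to treat the two identities jointly (so the residual $\sum r^{2n}\langle x_n,Qx_n\rangle$ term can be controlled via $\|Q(\tau_{\min}\xi_r)\|\lesssim \xi_r^{-1}$ against the $\|x_n\|^2$-sum coming from the $P$-side) or to adjust the $\xi$ in $Q$ by a fixed factor, which only changes $M$. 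This is bookkeeping rather than a genuine obstruction, but your sketch as written does not close it.
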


Now we estimate $\langle x,P(\xi) x \rangle$ and 
$\langle x,Q(\xi) x \rangle$, by using the integral
representations
\eqref{eq:PQ_def}.
Suppose that $A$ is the generator of an exponentially stable 
$C_0$-semigroup $(e^{At})_{t \geq 0}$.
Let
$K_0 \geq 1$ and $\omega >0$ satisfy
$\|e^{At}\| \leq K_0e^{-\omega t}$ for all $t \geq 0$. Then
the operator $P(\xi)$ given by \eqref{eq:P_def} satisfies
\begin{align*}
\langle
x, P(\xi)x 
\rangle
\leq 
K_0^2\|x\|^2 \int^{\infty}_0 e^{-2(\omega + \xi)t}dt   
\leq  \frac{K_0^2 \|x\|^2}{2\omega } 
\end{align*}
for all $x \in X$ and  $\xi >0$. Hence 
\begin{equation}
\label{eq:P_bound}
\sup_{\xi >0}
\langle
x, P(\xi)x 
\rangle \leq K_1 \|x\|^2
\end{equation}
for all $x \in X$, 
where $K_1 \coloneqq K_0^2/(2\omega)$.
Using the 
norm-estimate \eqref{eq:Inv_decay} 
for $e^{A^{-1}t}(-A)^{-\alpha}$ with $0 < \alpha \leq 1$,
we estimate $\langle
x, Q(\xi)x
\rangle$  for $x \in D((-A)^{\alpha})$ in the next lemma.
\begin{lemma}
	\label{eq:Q_bound_exp}
	Let $A$ be the generator of 
	an exponentially stable $C_0$-semigroup $(e^{At})_{t \geq 0}$
	on a Hilbert space $H$.
	Suppose that $A^{-1}$ generates
	a  bounded $C_0$-semigroup 
	$(e^{A^{-1}t})_{t \geq 0}$
	on  $H$. 
	Then 
	$Q(\xi) \in \mathcal{L}(H)$ given by \eqref{eq:Q_def} for $\xi >0$
	satisfies the following statements:
	\begin{enumerate}
		\renewcommand{\labelenumi}{(\roman{enumi})}
		\item
		For each $0< \alpha< 1$,
		there exists $K>0$ such that 
		\begin{equation}
		\label{eq:Q_bound1}
		\sup_{0< \xi < 1}
		\xi^{1-\alpha}
		\langle
		x, Q(\xi)x
		\rangle
		\leq 
		K \|(-A)^{\alpha}x\|^2
		\end{equation}
		for all $x \in D((-A)^{\alpha})$
		\item
		There exists  $K>0$ such that
		\begin{equation}
		\label{eq:Q_bound2}
		\sup_{0< \xi < 1/2}
		\frac{
			\langle
			x, Q(\xi)x
			\rangle
		}{\log(1/\xi)}
		\leq 
		K \|Ax\|^2
		\end{equation}
		for all $x \in D(A)$.
	\end{enumerate}
\end{lemma}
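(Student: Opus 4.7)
The plan is to start from the integral representation
\[
\langle x, Q(\xi)x \rangle = \int_0^{\infty} e^{-2\xi t}\bigl\|e^{A^{-1}t}x\bigr\|^2\,dt,
\]
which follows immediately from \eqref{eq:Q_def} and self-adjointness, and then substitute the norm-estimate of Theorem~\ref{thm:inv_gen_bound}. For $x \in D((-A)^\alpha)$, write $x = (-A)^{-\alpha}(-A)^\alpha x$, so that
\[
\bigl\|e^{A^{-1}t}x\bigr\| \leq \bigl\|e^{A^{-1}t}(-A)^{-\alpha}\bigr\|\cdot \bigl\|(-A)^\alpha x\bigr\|.
\]
By Theorem~\ref{thm:inv_gen_bound} there exist constants $M_\alpha, t_\alpha > 0$ such that $\|e^{A^{-1}t}(-A)^{-\alpha}\| \leq M_\alpha t^{-\alpha/2}$ for all $t \geq t_\alpha$. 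On $[0, t_\alpha]$, the boundedness of $(e^{A^{-1}t})_{t\geq 0}$ together with $(-A)^{-\alpha} \in \mathcal{L}(H)$ yields some uniform bound $\|e^{A^{-1}t}(-A)^{-\alpha}\| \leq M_\alpha'$. Combining these two regimes,
\[
\langle x, Q(\xi)x \rangle \leq \bigl\|(-A)^\alpha x\bigr\|^2 \left( (M_\alpha')^2 t_\alpha + M_\alpha^2 \int_{t_\alpha}^{\infty} \frac{e^{-2\xi t}}{t^\alpha}\,dt \right).
\]

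For part (i), with $0 < \alpha < 1$, enlarge the domain of the second integral to $(0,\infty)$ and use the explicit gamma identity $\int_0^{\infty} t^{-\alpha} e^{-2\xi t}\,dt = \Gamma(1-\alpha)(2\xi)^{\alpha-1}$. Multiplying through by $\xi^{1-\alpha}$ and restricting to $0<\xi<1$ absorbs the bounded term $(M_\alpha')^2 t_\alpha$ as well, yielding \eqref{eq:Q_bound1}. For part (ii) the borderline case $\alpha = 1$ occurs: the tail $\int_{t_1}^{\infty} t^{-1} e^{-2\xi t}\,dt$ diverges logarithmically as $\xi \to 0^+$. Estimate it by splitting at $t = 1/\xi$: on $[t_1, 1/\xi]$ the integrand is bounded by $1/t$, contributing at most $\log(1/(t_1 \xi)) \leq \log(1/\xi) + |\log t_1|$; on $[1/\xi, \infty)$, using $1/t \leq \xi$, one bounds the integral by $\xi \int_{1/\xi}^{\infty} e^{-2\xi t}\,dt = e^{-2}/2$. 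Dividing by $\log(1/\xi)$ and restricting to $0 < \xi < 1/2$ gives \eqref{eq:Q_bound2}.

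The only subtle point is that Theorem~\ref{thm:inv_gen_bound} supplies the bound $t^{-\alpha/2}$ only in the large-$t$ regime, while $Q(\xi)$ is defined by an integral starting at $t=0$. The remedy is the splitting at $t_\alpha$ described above; since the short-time contribution is controlled by a single constant $(M_\alpha')^2 t_\alpha$ (independent of $\xi$), it does not interfere with the scaling $\xi^{\alpha-1}$ or $\log(1/\xi)$ coming from the tail. Beyond this bookkeeping, no additional functional-calculus machinery is needed, since Theorem~\ref{thm:inv_gen_bound} already encapsulates the $\mathcal{B}$-calculus work.
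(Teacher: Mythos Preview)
Your proof is correct and follows essentially the same route as the paper: split the time integral at some $t_0$, control the short-time piece by boundedness of the inverse semigroup, and use Theorem~\ref{thm:inv_gen_bound} together with the gamma integral for the tail in part~(i). The only difference worth noting is in part~(ii): the paper invokes the classical exponential-integral inequality $\int_\tau^\infty t^{-1}e^{-t}\,dt \le e^{-\tau}\log(1+1/\tau)$ to bound the tail, whereas you use the elementary split at $t=1/\xi$. Both yield the same $\log(1/\xi)$ behavior; your argument is self-contained, while the paper's is one line shorter at the cost of a citation. One small point to tidy up: your split at $1/\xi$ tacitly assumes $1/\xi > t_1$, which may fail for $\xi$ near $1/2$ if $t_1>2$; this is harmless since the tail integral is uniformly bounded on any compact $\xi$-range and $\log(1/\xi)\ge\log 2$ there, but it deserves a sentence.
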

\begin{proof}
	Let $0 < \alpha \leq 1$ and 
	$
	M_1 \coloneqq 
	\sup_{t \geq 0}\|e^{A^{-1} t}\| < \infty$.
	By Theorem~\ref{thm:inv_gen_bound},
	there exist $M_2, t_0 >0$ such that 
	\[
	\big\|e^{A^{-1}t}x\big\| \leq \frac{M_2 \|(-A)^{\alpha}x\|}{t^{\alpha/2}}
	\] 
	for all $x \in D((-A)^{\alpha})$ and $t \geq t_0$.
	Hence 
	\begin{align*}
	\langle x, Q(\xi) x\rangle 
	&=
	\int^{t_0}_0 e^{-2\xi t} \big\|e^{A^{-1}t}x\big\|^2 dt + 
	\int^{\infty}_{t_0} e^{-2\xi t}\big\|e^{A^{-1}t}x\big\|^2 dt \\
	&\leq 
	t_0 M_1^2 \|x\|^2 + 
	M_2^2 \|(-A)^{\alpha}x\|^2 \int^{\infty}_{t_0} \frac{e^{-2\xi t} }{t^{\alpha}} dt
	\end{align*}
	for all $x \in D((-A)^{\alpha})$ and $\xi >0$. 
	When $0 < \alpha < 1$, we have
	\[
	\int^{\infty}_{t_0} \frac{e^{-2\xi t} }{t^{\alpha}} dt \leq 
	\int^{\infty}_0
	\frac{e^{-2\xi t} }{t^{\alpha}} dt =
	\frac{\Gamma(1-\alpha)}{(2\xi)^{1-\alpha}}.
	\]
	Hence there is $K>0$ such that 
	\eqref{eq:Q_bound1} holds for all $x \in D((-A)^{\alpha})$.
	
	Next we consider the case $\alpha = 1$. The exponential integral
	satisfies
	\[
	\int_\tau^{\infty} \frac{e^{-t}}{t} dt
	\leq 
	e^{-\tau}\log\left( 1+ \frac{1}{\tau} \right)
	\]
	for all $\tau >0$; see \cite[inequality (5)]{Gautschi1959}.
	Therefore,
	\[
	\int_{t_0}^{\infty}
	\frac{e^{-2\xi  t}}{t} dt =
	\int_{2\xi  t_0}^{\infty}
	\frac{e^{-t}}{t} dt \leq e^{-2\xi  t_0} \log\left ( 1 + \frac{1}{2\xi  t_0} \right)
	\]
	for all  $\xi >0$.
	There exists $c>0$ such that for all $0 < \xi <1/2$,
	\[
	\log\left(
	1 + \frac{1}{2\xi t_0}
	\right) \leq  c\log\left( \frac{1}{\xi }
	\right).
	\]
	Thus, 
	\eqref{eq:Q_bound2} holds for all $x \in D(A)$ and some suitable constant $K >0$.
\end{proof}

Combining Lemma~\ref{lem:PZ_lemma} with 
the estimates \eqref{eq:P_bound}--\eqref{eq:Q_bound2},
we obtain estimates for the decay rate of the solution 
$(x_n)_{n \in \mathbb{N}_0}$ of the difference equation 
\eqref{eq:difference_equation}
with smooth initial data.
\begin{theorem}
	\label{thm:CN_exp_stable}
	Let $A$ be the generator of 
	an exponentially stable $C_0$-semigroup $(e^{At})_{t \geq 0}$
	on a Hilbert space $H$.
	Suppose that $A^{-1}$ generates
	a  bounded $C_0$-semigroup 
	$(e^{A^{-1}t})_{t \geq 0}$
	on  $H$. 
	If $0 < \inf_{n \in \mathbb{N}_0} \tau_n \leq \sup_{n \in \mathbb{N}_0} \tau_n
	< \infty$, then there exists $K>0$ such that
	the solution $(x_n)_{n \in \mathbb{N}_0}$ of the difference equation 
	\eqref{eq:difference_equation} satisfies
	the following statements:
	\begin{enumerate}
		\renewcommand{\labelenumi}{(\roman{enumi})}	
		\item
		For each $0 < \alpha < 1$,
		there exists $K>0$ such that
		\begin{equation}
		\label{eq:xn_estimate1}
		\|x_n\| \leq 
		\frac{K}{n^{\alpha/2}}
		\|(-A)^\alpha x_0\|
		\end{equation}
		for all $x_0 \in D((-A)^{\alpha})$ and $n \geq 1$.
		\item
		There exists $K>0$ such that
		\begin{equation}
		\label{eq:xn_estimate2}
		\|x_n\| \leq K
		\sqrt{\frac{\log n}{n}}
		\|Ax_0\|
		\end{equation}
		for all $x_0 \in D(A)$ and $n \geq 2$.
	\end{enumerate}
\end{theorem}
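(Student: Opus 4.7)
The plan is to apply Lemma~\ref{lem:PZ_lemma} and optimize the parameter $r \in (0,1)$ as a function of $n$. Since $A$ generates an exponentially stable $C_0$-semigroup, the fractional power $(-A)^{-\alpha}$ is bounded for every $\alpha > 0$; in particular, $\|x_0\| \leq C_\alpha \|(-A)^\alpha x_0\|$ for $x_0 \in D((-A)^\alpha)$ and $\|x_0\| \leq C_1 \|Ax_0\|$ for $x_0 \in D(A)$. A direct computation shows that $\xi_r = (1-r^2)/(2(r^2+1))$ behaves like $(1-r)/2$ as $r \to 1^-$, so there exist $r_0 \in (0,1)$ and positive constants $c_1, c_2$ with $c_1(1-r) \leq \xi_r \leq c_2(1-r)$ for all $r_0 \leq r < 1$. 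This comparison converts bounds in the variable $\xi$ into bounds in the variable $1-r$.

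Next, I would bound the two summands of
\[
R(r) = \frac{2}{\tau_{\min}} P\!\left(\frac{\xi_r}{\tau_{\max}}\right) + 2\tau_{\max} Q(\tau_{\min} \xi_r).
\]
By \eqref{eq:P_bound}, the $P$-term is dominated by $C\|x_0\|^2$ uniformly in $r$, and the $Q$-term is controlled using \eqref{eq:Q_bound1} in case (i) and \eqref{eq:Q_bound2} in case (ii). Using the comparison $\xi_r \asymp 1-r$, and absorbing the $P$-contribution into the $Q$-contribution via the fractional-power inequalities above, yields for $r_0 \leq r < 1$
\[
\sqrt{\langle x_0, R(r) x_0 \rangle} \leq C (1-r)^{(\alpha-1)/2} \|(-A)^\alpha x_0\|, \quad 0<\alpha<1,
\]
in case (i), and
\[
\sqrt{\langle x_0, R(r) x_0 \rangle} \leq C \sqrt{\log\tfrac{1}{1-r}}\, \|Ax_0\|
\]
in case (ii).

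Substituting these estimates into Lemma~\ref{lem:PZ_lemma}, taking the supremum over $y \in H$ with $\|y\| = 1$, and dividing by $n+1$ gives in case (i)
\[
r^n \|x_n\| \leq \frac{C\|(-A)^\alpha x_0\|}{(n+1)(1-r)^{(2-\alpha)/2}},
\]
and an analogous bound with an extra $\sqrt{\log(1/(1-r))}$ factor in case (ii). Choosing $r = r_n := 1 - 1/n$ makes $r_n^n$ bounded below by a positive constant for $n$ large and turns $(1-r_n)^{-(2-\alpha)/2}$ into $n^{(2-\alpha)/2}$, so that \eqref{eq:xn_estimate1} follows from $n^{(2-\alpha)/2}/(n+1) \sim n^{-\alpha/2}$. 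The same choice in case (ii) produces the factor $\sqrt{n \log n}/(n+1) \sim \sqrt{(\log n)/n}$, giving \eqref{eq:xn_estimate2}. The routine steps are the two bounds on $R(r)$; the main bookkeeping obstacle is to verify that $\tau_{\min}\xi_r$ lies in the range where \eqref{eq:Q_bound1} and \eqref{eq:Q_bound2} apply (automatic once $r$ is close enough to $1$, with the finitely many remaining indices $n$ absorbed into the constant) and that $r_n^n$ is uniformly bounded below on the relevant range of $n$.
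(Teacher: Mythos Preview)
Your proposal is correct and follows essentially the same route as the paper: apply Lemma~\ref{lem:PZ_lemma}, bound $\langle x_0,R(r)x_0\rangle$ via \eqref{eq:P_bound} together with \eqref{eq:Q_bound1} or \eqref{eq:Q_bound2}, and then specialize $r$ so that $1-r\asymp 1/n$. The only cosmetic differences are that the paper takes $r=n/(n+1)$ rather than $r=1-1/n$ and computes $\xi_r$ exactly instead of using the comparison $\xi_r\asymp 1-r$; neither changes the argument.
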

\begin{proof}
	Set 
	\[
	\tau_{\min} \coloneqq \inf_{n \in \mathbb{N}_0} \tau_n >0,\quad 
	\tau_{\max} \coloneqq \sup_{n \in \mathbb{N}_0} \tau_n < \infty,
	\]
	and let
	the operator $R(r) \in \mathcal{L}(H)$ be as in Lemma~\ref{lem:PZ_lemma}.
	Then there is $M>0$ such that 
	the solution $(x_n)_{n \in \mathbb{N}_0}$ of the difference equation 
	\eqref{eq:difference_equation} satisfies
	\begin{equation}
	\label{eq:xn_bound}
	\|x_n\| \leq  \frac{M \sqrt{\langle x_0, R(r)x_0 \rangle } }{(n+1)r^n \sqrt{1-r}}
	\end{equation}
	for all $x_0 \in X$, $n \in \mathbb{N}$, and $r \in (0,1)$.
	
	Let $0 < \alpha \leq 1$, and
	define
	\[
	f_{\alpha}(\xi) \coloneqq
	\begin{cases}
	\xi^{\alpha-1}, & 0<\alpha<1, \vspace{5pt}\\
	\log \left( \dfrac{1}{\xi} \right), & \alpha = 1
	\end{cases}
	\]
	for $0 < \xi < 1$.
	By the estimates \eqref{eq:P_bound}--\eqref{eq:Q_bound2}, there exist
	$K_1,K_2 >0$ and $r_0\in(0,1)$ such that 
	\begin{equation}
	\label{xRx_bound}
	\langle x_0, R(r)x_0 \rangle
	\leq
	\frac{2K_1}{\tau_{\min}} \|x_0\|^2 + 2\tau_{\max} K_2 \|(-A)^{\alpha}x_0\|^2f_{\alpha}(\tau_{\min} \xi_r)
	\end{equation}
	for all $x_0 \in D((-A)^\alpha)$ and  $r \in (r_0,1)$.
	Put $r = n/(n+1)$ for $n \in \mathbb{N}$.
	Then
	\[
	\xi_r = \frac{1-r^2}{2(r^2+1)} =  \frac{2n+1}{2(2n^2+2n+1)}
	\]
	and
	\[
	\frac{1}{(n+1)r^n \sqrt{1-r}} = \frac{(1+1/n)^n}{\sqrt{n+1}}.
	\]
	Combining the 
	estimates \eqref{eq:xn_bound} and 
	\eqref{xRx_bound}, we have that
	there exist $K_4 >0$ and $n_0 \in \mathbb{N}$ such that 
	\[
	\|x_n\| \leq K_4 \sqrt{\frac{f_{\alpha} (1/n)}{n}}
	\|(-A)^{\alpha}x_0\|
	\]
	for all $x_0 \in D((-A)^{\alpha})$ and $n \geq  n_0 + 1$.
	By definition,
	\[
	\sqrt{\frac{f_{\alpha} (1/n)}{n}} =
	\begin{cases}
	\dfrac{1}{n^{\alpha/2}},  & 0<\alpha<1, \vspace{6pt}\\
	\sqrt{\dfrac{\log n }{n}},  & \alpha = 1
	\end{cases}
	\]
	for all $n \geq 2$.
	Since
	\[
	\|A_d(\tau)\| = 
	\left\|
	2\left(
	I - \frac{\tau}{2}A
	\right)^{-1}
	-
	I
	\right\| \leq c
	\]
	for all $\tau \in [\tau_{\min},\tau_{\max}]$ and some $c \geq 1$, it follows that
	$\|x_n\| \leq c^{n_0} \|x_0\|$ for all $x_0 \in X$ and $0\leq n \leq n_0$.
	Thus, we obtain the desired conclusion.
\end{proof}

We compare the norm-estimates \eqref{eq:xn_estimate1} and \eqref{eq:xn_estimate2}
with those in the time-invariant case $\tau_n \equiv 2$, using a simple example.
\begin{example}
	Let the operator $A$ on $\ell^2$ be as in Example~\ref{ex:exponential}.
	Define $A_d \coloneqq A_d(2)= (I+A)(I-A)^{-1}$ and let $0 < \alpha \leq 1$.
	To obtain the decay rate of 
	the solution $(x_n)_{n \in \mathbb{N}_0}$ of 
	the time-invariant difference equation 
	\[
	x_{n+1} = A_d x_n,\quad n \in \mathbb{N}_0;\qquad 
	x_0 \in D((-A)^{\alpha}),
	\]
	we
	estimate $\|A_d^{n}(-A)^{-\alpha}\|$ for $n \in \mathbb{N}_0$.
	
	We have that for all $n \in \mathbb{N}_0$,
	\[
	\big\|A_d^{n^2}(-A)^{-\alpha}\big\| = 
	\sup_{k \in \mathbb{N}} \left|
	\frac{1+\lambda_k}{1-\lambda_k}
	\right|^{n^2} |\lambda_k|^{-\alpha} \geq 
	\left|\frac{1+\lambda_n}{1-\lambda_n}
	\right|^{n^2} |\lambda_n|^{-\alpha}.
	\]
	Moreover, 
	\begin{align*}
	\left|
	\frac{1+\lambda_n}{1-\lambda_n}
	\right|^{n^2}
	\to e^{-2\gamma },\quad 
	n^{\alpha} |\lambda_n|^{-\alpha} \to 1
	\end{align*}
	as $n \to \infty$.
	Hence
	\[
	\liminf_{n \to \infty} 
	n^{\alpha} \|A_d^{n^2} (-A)^{-\alpha}\| \geq e^{-2\gamma}.
	\]
	This implies that 
	the norm-estimate \eqref{eq:xn_estimate1} for the case $0 < \alpha <1$
	is optimal in the sense that 
	one cannot obtain any better rates in general.

	Next we show that 
	\begin{equation}
	\label{eq:decay_example_exp}
	\|A_d^{n}A^{-1}\| = O\left(\frac{1}{\sqrt{n}} \right)\qquad  (n \to \infty). 
	\end{equation}
	For all $n \in \mathbb{N}$,
	\[
	\|A_d^{n}A^{-1}\| = 
	\sup_{k \in \mathbb{N}} \left|
	\frac{1+\lambda_k}{1-\lambda_k}
	\right|^{n} |\lambda_k|^{-1} 
	\leq \sqrt{\sup_{w \geq \gamma^2 + 1} f_n(w)},
	\]
	where
	\[
	f_n(w) \coloneqq \left(
	\frac{w+1-2\gamma}{w+1+2\gamma}
	\right)^n \frac{1}{w}.
	\]
	A simple calculation shows that for all sufficiently large $n \in \mathbb{N}$,
	\[
	\sup_{w \geq \gamma^2 + 1} f_n(w) = f_n(w_n),
	\] 
	where 
	\[
	w_n \coloneqq 2\sqrt{\gamma^2n^2+\gamma^2-\gamma n}  + 2\gamma n - 1.
	\]
	Since
	\[
	\left(
	\frac{w_n+1-2\gamma}{w_n+1+2\gamma}
	\right)^n \to e^{-1},\quad 
	\frac{n}{w_n} \to \frac{1}{4\gamma}
	\]
	as $n \to \infty$, we conclude that 
	the estimate \eqref{eq:decay_example_exp} holds.
	It is not clear, in general, whether one can remove the logarithmic term $\sqrt{\log n}$ in 
	the estimate \eqref{eq:xn_estimate2} for the case $\alpha =1$.
\end{example}

\subsection{Normal generators of polynomially stable semigroups}
Suppose that $A$ is a normal
operator on a Hilbert space $H$ and  generates a polynomially stable $C_0$-semigroup. As in the case of exponentially stable $C_0$-semigroups,
one can obtain an estimate for the decay rate of the solution 
$(x_n)_{n \in \mathbb{N}_0}$ of the difference equation 
\eqref{eq:difference_equation} with smooth initial data.
Note that $A^{-1}$ generates a contraction $C_0$-semigroup on $H$; see \cite[Lemma~4.1]{Zwart2007}.

The following lemma gives 
estimates analogous to those in Lemma~\ref{eq:Q_bound_exp}.
\begin{lemma}
	\label{lem:normal_Q_bound}
	Let $H$ be a Hilbert space and 
	let $A\colon D(A)\subset H \to H$ be  a normal operator generating
	a polynomially stable $C_0$-semigroup $(e^{At})_{t \geq 0}$ with 
	parameter $\beta>0$
	on $H$. Then 
	$Q(\xi) \in \mathcal{L}(H)$ given by \eqref{eq:Q_def} for $\xi >0$
	satisfies the following statements:
	\begin{enumerate}
		\renewcommand{\labelenumi}{(\roman{enumi})}
		\item
		For each $0< \alpha < 1+\beta/2$,
		there exists $K>0$ such that 
		\begin{equation}
		\label{eq:Q_bound_normal1}
		\sup_{0< \xi  < 1}
		\xi^{1-\frac{2\alpha}{2+\beta}}
		\langle
		x, Q(\xi)x
		\rangle
		\leq 
		K \|(-A)^{\alpha}x\|^2
		\end{equation}
		for all $x \in D((-A)^{\alpha})$.
		\item
		There exists  $K>0$ such that
		\begin{equation}
		\label{eq:Q_bound_normal2}
		\sup_{0< \xi < 1/2}
		\frac{
			\langle
			x, Q(\xi)x
			\rangle
		}{\log(1/\xi)}
		\leq 
		K \|(-A)^{1+\beta/2}x\|^2
		\end{equation}
		for all $x \in D((-A)^{1+\beta/2})$.
	\end{enumerate}
\end{lemma}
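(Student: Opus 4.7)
The plan is to follow the template of Lemma~\ref{eq:Q_bound_exp}, substituting the sharper decay rate for $e^{A^{-1}t}(-A)^{-\alpha}$ afforded by normality. Concretely, since $A^{-1}$ generates a contraction $C_0$-semigroup (as noted just before the lemma), set $M_1 \coloneqq \sup_{t \geq 0} \|e^{A^{-1}t}\| \leq 1$. By Proposition~\ref{prop:inv_gen_bound_normal}, for any fixed $\alpha$ in the range $0 < \alpha \leq 1+\beta/2$ there exist $M_2, t_0 > 0$ such that
\[
\big\|e^{A^{-1}t}x\big\| \leq \frac{M_2 \|(-A)^\alpha x\|}{t^{\alpha/(2+\beta)}}
\]
for all $x \in D((-A)^{\alpha})$ and $t \geq t_0$. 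Splitting the integral defining $Q(\xi)$ at $t_0$ yields
\[
\langle x, Q(\xi) x\rangle \leq t_0 M_1^2 \|x\|^2 + M_2^2 \|(-A)^\alpha x\|^2 \int_{t_0}^\infty \frac{e^{-2\xi t}}{t^{2\alpha/(2+\beta)}}\, dt,
\]
so the problem reduces to controlling the tail integral on the right-hand side.

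For part (i), the hypothesis $0 < \alpha < 1+\beta/2$ is exactly the condition $2\alpha/(2+\beta) < 1$ ensuring integrability at the origin. Extending the integral to $[0,\infty)$ and substituting $u = 2\xi t$ gives the gamma-function identity
\[
\int_{t_0}^\infty \frac{e^{-2\xi t}}{t^{2\alpha/(2+\beta)}}\, dt \leq \int_0^\infty \frac{e^{-2\xi t}}{t^{2\alpha/(2+\beta)}}\, dt = \frac{\Gamma\!\left(1-\tfrac{2\alpha}{2+\beta}\right)}{(2\xi)^{1-2\alpha/(2+\beta)}},
\]
which combined with the bound $t_0 M_1^2 \|x\|^2 \leq t_0 M_1^2 c_\alpha \|(-A)^\alpha x\|^2$ (using boundedness of $(-A)^{-\alpha}$, since $A$ is invertible) produces the claimed estimate \eqref{eq:Q_bound_normal1} after absorbing the prefactor $\xi^{1-2\alpha/(2+\beta)} \leq 1$ into the constant.

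For part (ii), setting $\alpha = 1+\beta/2$ makes the tail exponent equal to $1$, so one cannot extend the integral to $0$. Instead, apply the standard exponential-integral bound $\int_\tau^\infty e^{-t}/t\, dt \leq e^{-\tau}\log(1 + 1/\tau)$ (already used in the proof of Lemma~\ref{eq:Q_bound_exp}) with $\tau = 2\xi t_0$; for $0 < \xi < 1/2$ one has $\log(1+ 1/(2\xi t_0)) \leq c \log(1/\xi)$ with $c$ depending only on $t_0$. This gives \eqref{eq:Q_bound_normal2}.

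Since the structural calculation is identical to the exponentially stable case, no genuine obstacle arises; the only point requiring care is to verify that the change in decay exponent from $\alpha/2$ to $\alpha/(2+\beta)$ shifts the critical threshold from $\alpha = 1$ to $\alpha = 1 + \beta/2$, which is exactly the boundary separating the power-law and logarithmic regimes in the two statements.
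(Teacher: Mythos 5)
Your proof is correct and follows exactly the route the paper takes: the paper's own proof sets $\widetilde\alpha = 2\alpha/(2+\beta)$, invokes Proposition~\ref{prop:inv_gen_bound_normal} for the decay rate $t^{-\widetilde\alpha/2}$ of $\|e^{A^{-1}t}x\|$, and then explicitly defers the remaining computation to the proof of Lemma~\ref{eq:Q_bound_exp}, which is precisely the split-at-$t_0$, gamma-function, and exponential-integral argument you carry out. Your observation that the threshold $\alpha = 1+\beta/2$ is exactly where the tail exponent $2\alpha/(2+\beta)$ reaches $1$ is the right justification for the division into the two cases.
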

\begin{proof}
	Let $0< \alpha \leq 1+\beta/2$, and
	put
	\[
	\widetilde \alpha \coloneqq \frac{2\alpha}{2+\beta} \leq 1.
	\]
	By Proposition~\ref{prop:inv_gen_bound_normal},
	there exist $M, t_0 >0$ such that 
	\begin{equation}
	\label{eq:inv_gene_poly_bound}
	\big\|e^{A^{-1}t}x\big\| \leq \frac{M \|(-A)^{\alpha}x\|}{t^{\widetilde \alpha / 2}}
	\end{equation}
	for all $x \in D((-A)^{\alpha})$ and  $t \geq t_0$.
	The rest of the proof 
	is quite similar to that of Lemma~\ref{eq:Q_bound_exp},
	and hence we omit it.
\end{proof}

If $A$ is the generator of
a polynomially stable $C_0$-semigroup $(e^{At})_{t \geq 0}$ with
parameter $\beta>0$, then \cite[Proposition~3.1]{Batkai2006} shows that
for all $\alpha >0$, there exist $M,t_0 >0$ such that
\[
\big\|e^{A t}x\big\| \leq \frac{M \|(-A)^{\alpha} x\|}{t^{\alpha/\beta}}
\]
for all $x \in D((-A)^{\alpha})$ and  $t \geq t_0$. The decay rate $t^{-\alpha/\beta}$
is faster than the decay rate of $\|e^{A^{-1}t}x\| $  in  \eqref{eq:inv_gene_poly_bound}. Hence, the operator $P(\xi)$ given by \eqref{eq:P_def} satisfies 
the same estimates as $Q(\xi)$.

Using the estimates on $P(\xi)$ and $Q(\xi)$, we derive
a norm-estimate for the solution $(x_n)_{n \in \mathbb{N}_0}$ 
of the difference equation
\eqref{eq:difference_equation}.
Since this result can be obtained by the same arguments as 
in the proof of Theorem~\ref{thm:CN_exp_stable},
we omit the proof. 

\begin{proposition}
	\label{prop:normal_decay_rate}
	Let $H$ be a Hilbert space and 
	let $A\colon D(A)\subset H \to H$ be  a normal operator generating
	a polynomially stable $C_0$-semigroup $(e^{At})_{t \geq 0}$ with 
	parameter $\beta>0$
	on $H$. 
	If $0 < \inf_{n \in \mathbb{N}_0} \tau_n \leq \sup_{n \in \mathbb{N}_0} \tau_n
	< \infty$, then the solution $(x_n)_{n \in \mathbb{N}_0}$  of
	the difference equation \eqref{eq:difference_equation} 
	satisfies
	the following statements:
	\begin{enumerate}
		\renewcommand{\labelenumi}{(\roman{enumi})}	
		\item
		For each $0 < \alpha < 1 + \beta/2$,
		there exists $K>0$ such that
		\begin{equation}
		\label{eq:decay_sol_b_small}
		\|x_n\| \leq 
		\frac{K}{n^{\alpha/(2+\beta)}}
		\|(-A)^\alpha x_0\|
		\end{equation}
		for all $x_0 \in D((-A)^{\alpha})$ and $n \geq 1$.
		\item
		There exists $K>0$ such that
		\begin{equation}
		\label{eq:decay_sol_b_large}
		\|x_n\| \leq K
		\sqrt{\frac{\log n}{n}}
		\|(-A)^{1+\beta/2}x_0\|
		\end{equation}
		for all $x_0 \in D((-A)^{1+\beta/2})$ and $n \geq 2$.
	\end{enumerate}
\end{proposition}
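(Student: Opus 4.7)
The plan is to mirror the argument used for Theorem~\ref{thm:CN_exp_stable}, with the only substantive change being the estimates on $\langle x_0,P(\xi)x_0\rangle$ and $\langle x_0,Q(\xi)x_0\rangle$ that feed into the operator $R(r)$ of Lemma~\ref{lem:PZ_lemma}. Set
\[
\tau_{\min} \coloneqq \inf_{n} \tau_n > 0,\qquad \tau_{\max} \coloneqq \sup_{n} \tau_n < \infty,
\]
and recall from Lemma~\ref{lem:PZ_lemma} that there exists $M>0$ such that
\[
\|x_n\| \leq \frac{M\sqrt{\langle x_0,R(r)x_0\rangle}}{(n+1)r^{n}\sqrt{1-r}}
\]
for every $x_0 \in H$, $n \in \mathbb{N}$, and $r \in (0,1)$.

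Next I would bound the two terms in $R(r)$. The paper's remark following Lemma~\ref{lem:normal_Q_bound} shows that $P(\xi)$ satisfies the same (indeed better) estimates as $Q(\xi)$: namely, for $0 < \alpha < 1+\beta/2$ there is a constant $K_P > 0$ with
\[
\sup_{0<\xi<1}\xi^{1-\frac{2\alpha}{2+\beta}} \langle x,P(\xi)x\rangle \leq K_P\|(-A)^\alpha x\|^2,\quad x \in D((-A)^\alpha),
\]
and analogously a logarithmic bound for $\alpha = 1+\beta/2$. Combined with Lemma~\ref{lem:normal_Q_bound}, this yields constants $K_1,K_2>0$ and $r_0 \in (0,1)$ such that
\[
\langle x_0,R(r)x_0\rangle \leq K_1\|(-A)^\alpha x_0\|^2\, g_\alpha(\xi_r)
\]
for all $x_0 \in D((-A)^\alpha)$ and $r \in (r_0,1)$, where $g_\alpha(\xi) = \xi^{-1+2\alpha/(2+\beta)}$ in case (i) and $g_\alpha(\xi) = \log(1/\xi)$ in case (ii); the constants absorbed into $g_\alpha$ depend on $\tau_{\min}$ and $\tau_{\max}$ but not on $n$.

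The final step is to choose $r = n/(n+1)$, so that
\[
\xi_r = \frac{2n+1}{2(2n^2+2n+1)}\asymp \frac{1}{n},\qquad \frac{1}{(n+1)r^{n}\sqrt{1-r}} \asymp \frac{1}{\sqrt{n}}.
\]
Substituting into the above and taking square roots gives, for large enough $n$,
\[
\|x_n\| \lesssim \sqrt{\frac{g_\alpha(1/n)}{n}}\,\|(-A)^\alpha x_0\|.
\]
A direct computation shows $\sqrt{g_\alpha(1/n)/n} = n^{-\alpha/(2+\beta)}$ in case (i) and $\sqrt{\log n/n}$ in case (ii), which is the desired bound for $n \geq n_0$. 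For the finitely many $n < n_0$, the uniform bound $\|A_d(\tau)\| \leq c$ on $[\tau_{\min},\tau_{\max}]$ implies $\|x_n\| \leq c^{n_0}\|x_0\|$, and this is absorbed into the constant $K$ upon enlarging it; recall that $\|x_0\| \leq \|(-A)^{-\alpha}\|\,\|(-A)^\alpha x_0\|$ since $A$ is boundedly invertible.

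The bookkeeping is routine; the only place requiring care is verifying that $P(\xi)$ really does obey the same estimates as $Q(\xi)$ in this polynomial setting, which is why the paper inserted the remark between Lemmas~\ref{lem:normal_Q_bound} and this proposition. Granted that remark, substituting the bounds into Lemma~\ref{lem:PZ_lemma} and optimizing in $r$ by taking $r = n/(n+1)$ is essentially identical to the proof of Theorem~\ref{thm:CN_exp_stable}, with the exponent $\alpha/2$ there replaced by $\alpha/(2+\beta)$ here, reflecting the slower decay of $\|e^{A^{-1}t}(-A)^{-\alpha}\|$ in Proposition~\ref{prop:inv_gen_bound_normal}.
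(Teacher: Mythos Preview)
Your proof sketch is correct and follows precisely the approach the paper intends: the paper explicitly omits the proof, stating that it ``can be obtained by the same arguments as in the proof of Theorem~\ref{thm:CN_exp_stable},'' and you have carried out exactly that argument, substituting the $P(\xi)$ and $Q(\xi)$ bounds from Lemma~\ref{lem:normal_Q_bound} and the subsequent remark in place of \eqref{eq:P_bound}--\eqref{eq:Q_bound2}, then optimizing with $r=n/(n+1)$. The only cosmetic slip is that you introduce constants $K_1,K_2$ but use only $K_1$ in the displayed bound on $\langle x_0,R(r)x_0\rangle$.
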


\begin{example}
	Consider again the operator $A$ on $\ell^2$ 
	in Example~\ref{ex:poly_stable_inv}. We recall that
	the parameter $\beta$ of polynomial decay is given by $\beta = 1$.
	Define $A_d \coloneqq A_d(2)= (I+A)(I-A)^{-1}$.
	It has been shown in \cite[Example~4.6]{Wakaiki2021JEE}
	that 
	the estimate $\|A_d^n (-A)^{-3/m}\| = O(n^{-1/m})$ is optimal for all $m \in \mathbb{N}$.
	Hence we see from the case $m =3$ 
	that the estimate \eqref{eq:decay_sol_b_small} cannot in general be
	improved.
	Although
	$\|A_d^n (-A)^{-3/2}\| = O(1/\sqrt{n})$ can be deduced from 
	the case $m=2$, it is open whether 
	the logarithmic term $\sqrt{\log n}$ in the 
	norm-estimate \eqref{eq:decay_sol_b_large}
	can be omitted.
\end{example}

\end{document}